\newcommand{\beweis}{\ifthenelse{\boolean{skript}} {
\begin{proof}[Beweis]
\mbox{}\vfill \mbox{}
\end{proof}
\pagebreak }{} }
\providecommand{\Fourier}{\mathcal{F}}
\DeclareMathOperator{\rem}{\text{R}}
\renewcommand{\hat}{\widehat}
\renewcommand{\tilde}{\widetilde}
\newcommand{\normz}[1]{\left\| #1 \right\|_{\lk^2}   }
\providecommand{\intu}{\int_0^{|u|}\limits   }
\newcommand{\ebig}[1]{\mathbb{E}\big[  #1  \big]  }
\newcommand{\Ebig}[1]{\mathbb{E}\Big[  #1  \Big]  }
\newcommand{\Ebigg}[1]{\mathbb{E}\Bigg[  #1  \Bigg]  }
\newcommand{\ebigg}[1]{\mathbb{E}\bigg[  #1  \bigg]  }
\newcommand{\ecfLV}{\hat{\phi}_X^{ {\scriptscriptstyle L \hspace*{-0,05cm}V }  }            }
\newcommand{\ecfmod}{\hat{\phi}_X^{ {\scriptscriptstyle m \hspace*{-0,05cm} o\hspace*{-0,05cm} d }  }            }
\providecommand{\m}{\text{-}}
\renewcommand{\d}{\,  \textrm{d}  }
\newcommand{\dpsi}{\frac{\partial}{\partial u_1}  \psi }
\newcommand{\dpsiemp}{\frac{\partial}{\partial u_1}  \hat{\psi} }
\newcommand{\psitilde}{ \tilde{\psi} }
\newcommand{\psiemp}{\hat{\psi}}
\newcommand{\inth}{ \int_{-1/h}^{1/h} \limits }
\newenvironment{keywords}%
   {\begin{trivlist}\item[]{\bfseries\sffamily Keywords:}\ }
   {\end{trivlist}}
\newenvironment{AMS}%
   {\begin{trivlist}\item[]{\bfseries\sffamily AMS Subject Classification:}\ }
   {\end{trivlist}}
\begin{document}
\title{Density deconvolution from repeated measurements without symmetry assumption on the errors}
\author{Fabienne Comte\thanks{MAP 5, UMR CNRS 8145, Universit\'e Paris D\'escartes, France }   \   and 
Johanna Kappus  \thanks{Corresponding author; Institut f\"ur Mathematik, Universit\"at Rostock, Germany,  e-mail: johanna.kappus@uni-rostock.de }  }
\maketitle 
\begin{abstract}
\noindent We consider deconvolution from repeated observations with unknown error distribution. So far, this model has mostly been studied under the additional assumption that the errors are symmetric. 

\noindent We construct  an estimator for the non-symmetric error case and study its theoretical properties and practical performance.  It is interesting to note that we can improve substantially upon the rates of convergence which have so far been presented in the literature and, at the same time, dispose of most of the  extremely restrictive assumptions which have been imposed so far. 
\end{abstract}
\begin{keywords}
Nonparametric estimation. Density deconvolution. Repeated measurements. Panel data
\end{keywords}
\begin{AMS}
62G05   \  \  62G07\   \  62G20
\end{AMS}
\section{Introduction}\setlength{\parindent}{0pt}
Density deconvolution is  one of the classical topics in nonparametric statistics and  has been extensively studied during the past decades. The aim is to identify the density of some random variable $X$, which cannot be observed directly, but is contaminated by some additional additive error $\eps$, independent of $X$. 

A large amount of literature is available on the case where the distribution of the errors is perfectly known. To mention only a few of the various publications on this subject, we refer to \cite{Caroll_Hall},  \cite{Stefanski}, \cite{Stefanski_Carroll}, \cite{Fan}, \cite{Efromovich}, \cite{Pensky}, \cite{Comte_Rosenholz}.  

However, perfect knowledge of the error distribution is hardly ever realistic in applications. For this reason, the interest in deconvolution problems with unknown error distribution has grown. \cite{Meister_miss} has investigated deconvolution with misspecified error distribution.  \cite{Diggle_Hall}  replace the unknown characteristic function of the errors by its empirical counterpart and then apply standard kernel deconvolution techniques.  The effect of estimating the characteristic function of the errors is then  systematically studied by \cite{Neumann}. Let us also mention  \cite{Johannes} for deconvolution problems with unknown errors.

The last mentioned publications have been working under the standing  assumption that an additional sample of the pure noise is available. This is realistic in some practical examples. For example, if  the noise is due to some measurement error, it is possible to carry out additional measurements in absence of any signal.

 However, in many fields of applications it is not  realistic to assume that an additional training set is available. It is clear that, to make the problem identifiable, some additional information on the noise is required.  In the present work, we are interested in the case where  information can be drawn from repeated measurements of $X$, perturbed by independent errors.   This framework is known as model of repeated measurements or panel data model.  The observations are of the type 
\begin{equation}
Y_{j,k}= X_j+\eps_{j,k};\  \  j= 1, \cdots, n; \   \   k=1, \cdots, N, 
\end{equation}
where all $X_j$ and $\eps_{j,k}$ are independent. 

This problem is relatively well-studied under the assumption that the distribution of the errors is symmetric. We refer to \cite{Delaigle_repeated}, \cite{Comte_repeated} and \cite{Kappus_Mabon}. 

In the present paper, we consider deconvolution from repeated observations when the symmetry assumption on the errors is no longer satisfied.   The estimation strategies which have been developed for the symmetric error case cannot be generalized to this framework and a completely different approach is in order.  The same problem has been investigated in earlier publications    by \cite{Li_Vuong} and by \cite{Neumann_2}. 

The paper by Li and Vuong has two major drawbacks. On one hand, the rates of convergence presented therein are extremely slow, in comparison to the rate results which are usually found in deconvolution problems. On the other hand, the mentioned authors impose extremely restrictive assumptions on the target density and on the distribution of the noise, which are only met in some exceptional cases. 

Neumann succeeds in overcoming this second drawback and constructing consistent estimators under most general assumptions. However, rate results are not given in that paper so the question whether the convergence rates found by Li and Vuong can be improved has so far remained unanswered. 
Moreover, the estimator proposed by Neumann is only implicitly given and non-constructive, so it is difficult to investigate the practical performance. 

In the present work, we study  a fully constructive estimator, which is based on a modification of the original procedure by Li and Vuong.  It is interesting to note that we are able to improve substantially upon the rates of convergence found by Li and Voung and, at the same time, dispose of most of their restrictive assumptions. Surprisingly, it can also be shown that our estimator outperforms, in some cases, the estimators which have been studied for the structurally simpler case of repeated observations with symmetric errors. 

This paper is organized as follows: In Section 2, we introduce the statistical model and define estimators for the target density, as well as for the residuals. In Section 3, we provide upper risk bounds and derive rates of convergence. In Section 4, we present some data examples.  All proofs are postponed to Section 5. 
\section{Statistical model an estimation procedure}
Let $\eps_1$ and $\eps_2$ be independent copies of a random variable $\eps$ and let $X$ be independent of $\eps_1$ and $\eps_2$.  By $Y$, we denote  the random vector $Y=(Y_1, Y_2)=(X+\eps_1, X+\eps_2)$. We observe $n$ independent copies 
\[
Y_j=(Y_{j,1}, Y_{j,2}), \  j=1, \cdots, n
\]
of $Y$.   The following assumptions are imposed  on $X$ and $\eps$:
\begin{itemize}
\item[(A1)]  $X$ and $\eps$  have a square integrable Lebesgue densities  $f_X$ and $f_{\eps}$. 
\item[(A2)] The characteristic functions $\phi_{\eps}(\cdot) = \E[e^{i \cdot \eps }  ]$ and $\phi_X(\cdot)=\E[e^{i\cdot X}]$ vanish nowhere.
\item[(A3)]  $\E[\eps]=0$. 
\end{itemize}
Our objective is to estimate $f_X$ and $f_\eps$.  This statistical framework allows a straightforward generalization to the case where more than two observations of the noisy random variable $X$ are feasible. However, for sake of simplicity and clarity, we content ourselves with considering the two dimensional case. 

In the sequel, we denote by $\psi$ the characteristic function of the two dimensional random vector $Y$, 
\begin{equation}
\psi(u_1, u_2)  = \E[ e^{i (u_1 Y_1 + u_2 Y_2) }  ].
\end{equation}
By independence of $X$, $\eps_1$ an $\eps_2$, the following  holds for $\psi$: 
\begin{equation}
\label{Strukturformel}
\psi(u_1, u_2)= \E[e^{i(u_1+u_2)X  } e^{iu_1\eps_1} e^{iu_2 \eps_2}  ]
=\phi_X(u_1+u_2)\phi_{\eps}(u_1) \phi_{\eps}(u_2). 
\end{equation}
From formula \eqref{Strukturformel} one derives  the following Lemma, which has  been formulated and proved in \cite{Li_Vuong}. Lemma 2.1 is then the key to the construction of the estimator. 
\begin{lemma} Assume that $\E[|Y_1|]<\infty$ and $\E[\eps]=0$.  Then $\phi_X$ is determined  by $\psi$ via the following formula:
\begin{equation}
\label{cf_X}
\phi_X(u) = \exp \int_0^u   \frac{\dpsi(0,u_2)   }{ \psi(0,u_2)   }  \d u_2.
\end{equation}
\end{lemma}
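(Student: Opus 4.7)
The plan is to differentiate the structural identity \eqref{Strukturformel} in $u_1$ and evaluate at $u_1 = 0$, which will collapse the right-hand side to a logarithmic derivative of $\phi_X$ alone. Integrating and exponentiating then yields the formula.

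More concretely, the first step is to verify that all quantities appearing on the right-hand side are well-defined. The assumption $\E[|Y_1|] < \infty$ implies $\E[|X|] < \infty$ and $\E[|\eps|]<\infty$, so $\phi_X$, $\phi_\eps$ and $\psi$ are continuously differentiable. In particular, $\phi_\eps'(0) = i\E[\eps] = 0$ by assumption (A3), and $\phi_\eps(0)=1$. By assumption (A2), $\phi_X$ and $\phi_\eps$ vanish nowhere, so the denominator $\psi(0,u_2) = \phi_X(u_2)\phi_\eps(u_2)$ is nonzero on all of $\R$ and the integrand in \eqref{cf_X} is a continuous function of $u_2$.

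Next, differentiating \eqref{Strukturformel} with respect to $u_1$ yields
\begin{equation}
\dpsi(u_1, u_2) = \phi_X'(u_1 + u_2)\phi_\eps(u_1)\phi_\eps(u_2) + \phi_X(u_1+u_2)\phi_\eps'(u_1)\phi_\eps(u_2).
\end{equation}
Setting $u_1 = 0$ and using $\phi_\eps(0)=1$, $\phi_\eps'(0)=0$, the second summand vanishes and I obtain
\begin{equation}
\dpsi(0, u_2) = \phi_X'(u_2)\phi_\eps(u_2).
\end{equation}
Combined with $\psi(0,u_2)=\phi_X(u_2)\phi_\eps(u_2)$, this gives the clean identity
\begin{equation}
\frac{\dpsi(0, u_2)}{\psi(0, u_2)} = \frac{\phi_X'(u_2)}{\phi_X(u_2)}.
\end{equation}

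The final step is to integrate this from $0$ to $u$. Since $\phi_X$ is continuously differentiable and nowhere zero, it admits a continuous logarithm $L$ on $\R$ with $L(0) = \log \phi_X(0) = 0$, and $L'(u_2) = \phi_X'(u_2)/\phi_X(u_2)$. Therefore
\begin{equation}
\int_0^u \frac{\dpsi(0, u_2)}{\psi(0, u_2)} \d u_2 = L(u) - L(0) = L(u),
\end{equation}
and exponentiating gives $\exp(L(u)) = \phi_X(u)$, which is the claimed formula.

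The only delicate point worth flagging is the choice of branch of the logarithm: one cannot naively write $\log \phi_X$ as the complex logarithm of a complex-valued function, but the argument above bypasses this by defining $L$ as the primitive of the continuous function $\phi_X'/\phi_X$ with $L(0)=0$, which is legitimate precisely because $\phi_X$ is nowhere vanishing by (A2). Everything else is routine differentiation under the expectation, justified by the integrability of $Y_1$.
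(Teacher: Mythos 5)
Your proof is correct. Note that the paper itself does not prove this lemma — it attributes both statement and proof to Li and Vuong — so there is no internal argument to compare against; however, your derivation is exactly the standard Kotlarski-type argument one finds there: differentiate the structural identity \eqref{Strukturformel} in $u_1$, evaluate at $u_1=0$ using $\phi_\eps(0)=1$ and $\phi_\eps'(0)=i\E[\eps]=0$, recognize the quotient as $\phi_X'/\phi_X$, and integrate. Your remark on the branch of the logarithm is well placed: the right way to see $\exp\!\bigl(\int_0^u \phi_X'/\phi_X\bigr)=\phi_X(u)$ without choosing a branch is to check that $\phi_X(u)\exp\!\bigl(-\int_0^u \phi_X'/\phi_X\bigr)$ has vanishing derivative and equals $1$ at $u=0$, which uses precisely that $\phi_X$ is nowhere zero (A2) and continuously differentiable (from $\E[|Y_1|]<\infty$).
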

Li and Vuong propose the following estimator of $\phi_X$:
\begin{equation}
\label{Schaetzer_cf_LV}
\hat{\phi}_X^{ {\scriptscriptstyle L \hspace*{-0,05cm}V }  }(u):= \exp \int_0^u  \frac{\dpsiemp(0,u_2)}{\psiemp(0,u_2)}  \d u_2, 
\end{equation}
with 
\begin{equation}
\psiemp(u_1,u_2)= \frac{1}{n}  \sum_{j=1}^n  e^{i(u_1 Y_{j,1}+ u_2 Y_{j,2} )  } \   \   \ 
\text{and}  \   \    \
 \dpsiemp(u_1,u_2)= \frac{1}{n} \sum_{j=1}^n  i Y_{j,1}  e^{i (u_1 Y_{j,1}+ u_2 Y_{j,2} ) }
\end{equation}
denoting the empirical version of $\psi$ and its first partial derivative. 

Given a kernel $\kf$ and bandwidth $h$, the corresponding estimator of $f_X$ is 
\begin{equation}
\label{Schaetzer_Li}
\hat{f}_{{X_{h}} }^{{\scriptscriptstyle L\hspace*{-0,05cm} V  }} (x) = \frac{1}{2\pi}  \int e^{-iux } \ecfLV(u) \Fourier \kf_h(u)  \d u, 
\end{equation}
with $\kf_h(\cdot) = 1/h \kf(\cdot/h)$ and with 
$\Fourier \kf_h(u)=\int e^{iux}\kf_h(x) \d x$  denoting the Fourier transformation. 
We propose a  modified version of $\hat{f}_{{  X_{{h}}  }}^{ {\scriptscriptstyle L\hspace*{-0,05cm}V} }$. First of all, it is well  known that  small values of the denominator lead to unfavorable effects in the estimation procedure, so   it is preferable to consider some regularized version of $\hat{\psi}$. One  possible approach is to 
replace $\hat{\psi}$ in the denominator by  $\psiemp+\rho$ with some Ridge-parameter to be appropriately chosen. See, for example, \cite{Delaigle_repeated}.  However, following ideas in  \cite{Neumann}, we prefer to define
\begin{align}
\psitilde(0,u_2) = \frac{ \hat{\psi}(0,u_2) }{\min\{ n^{1/2} |\hat{\psi}(0,u_2)| , 1 \} }. 
\end{align}
and use $1/ \psitilde(0,u_2)$ as an estimator of $1/\psi(0,u_2)$. 

This leads  to defining the following modified estimator of $\phi_X$: 
\begin{equation}
\label{erster_Schaetzer}
\ecfmod(u) = \exp \int_0^u  \frac{\dpsiemp(0,u_2)}{\psitilde(0,u_2)}  \d u_2.
\end{equation}
Next, we have to pay attention to the fact that, by definition, neither $\ecfLV$ nor $\ecfmod$ need to be characteristic functions and they  may take values larger than one. Indeed, much of the complexity in the proofs presented in \cite{Li_Vuong} and many of the restrictive  assumptions  imposed therein are a consequence of the fact that there appears an unbounded exponential term in the definition of $\ecfLV$ which has to be controlled, leading to some Bernstein-type arguments and hence to the assumption that the supports are bounded. 

However, the quantity to be estimated  is, in any case, a characteristic function, so the quality of the estimator can be improved by bounding the absolute value of  $\ecfmod$. These considerations lead to defining our final estimator of the characteristic function of $X$, 
\begin{equation}
\label{Schaetzer_X_final}
\hat{\phi}_X(u) : =  \frac{ \ecfmod(u)  }{ \max\{ 1  ,\abs{ \ecfmod(u)    } \}      }.
\end{equation}

Sometimes, one may not only be interested in the estimation of the target density itself, but also in the distribution of the residuals.  The following holds true for the characteristic function of $\eps$: 
\begin{equation}
\phi_\eps(u) = \frac{\psi(0,u)  }{\phi_X(u)}. 
\end{equation}
This quantity can hence  be recovered, using a plug-in estimator.  We set
\[
\tilde{\phi}_X(u) = \frac{\hat{\phi}_X(u)}{\min\{ n^{1/2} |\hat{\phi}_X(u)|, 1 \}  }
\]
and then   
\begin{equation}
\label{Schaetzer_eps_final}
\hat{\phi}_{\eps}(u): = \frac{\hat{\psi}(0,u) }{\tilde{\phi}_X(u)  }.
\end{equation}

Given a kernel  $\kf$ and bandwidth $h>0$, the  kernel estimators of $f_X$ and $f_{\eps}$ corresponding to formula \eqref{Schaetzer_X_final} and 
\eqref{Schaetzer_eps_final} are 
\begin{equation}
\hat{f}_{{ X_{h} }} (x)  = \frac{1}{2\pi} \int e^{- iux }  \hat{\phi}_X(u) \Fourier \kf_h(u) \d u 
\end{equation}
and 
\begin{equation}
\hat{f}_{{\eps_{h} }} (x)  = \frac{1}{2\pi} \int e^{- iux }  \hat{\phi}_{\eps}(u) \Fourier \kf_h(u) \d u .
\end{equation}
\section{Risk bounds and rates of convergence}
\subsection{Non-asymptotic risk bounds}
We start by analyzing the performance  of $\hat{f}_X$.  It is important to stress that we can dispose of most of the assumptions which have been imposed in earlier publications on the subject. Indeed the conditions on $X$ and $\eps$ which are imposed in \cite{Li_Vuong}, namely boundedness of the support of $f_X$ and $f_\eps$ and nowhere  vanishing characteristic functions, are violated for any distribution which is commonly studied in probability theory. In  \cite{Bonhomme}  an estimator is constructed under weaker assumptions on the distributions. But still, it is required in that paper that
$X$ have moments of all orders, which is certainly quite restrictive. Moreover, the rates which are found by those authors turn out to  be  even slower than the rate results presented in \cite{Li_Vuong}.  It is interesting to note that we can substantially improve upon these results, even though our assumptions are much weaker. 

In \cite{Neumann_2}, an implicit estimator of $f_X$  is proposed.   The strength of this approach lies in the fact that it is fully general. However, the price one has to pay is the lack of constructivity. The estimator is found as the solution to an abstract minimization problem, so the practical computation is not clear. Moreover, consistency of the estimator is shown, but rate results are not given, so nothing can be said about the quality of the  procedure. 

Finally, \cite{Delaigle_repeated} and \cite{Comte_repeated} have studied estimators in a repeated measurement model, but it is assumed in both papers that the distribution of the noise is symmetric. It is the main concern of the present publication to be able to dispose of the symmetry. 

In the sequel, we impose the following mild regularity assumption on the characteristic function of $X$: 
\begin{itemize}
\item [(A4)]  For some positive constant $C_{X}$, 
\[
\forall u, v \in \R_+:\    \    ( v\leq u ) \Rightarrow 
( |\phi_X(u) |    \leq  C_{X} |\phi_X(v)|  ). 
\]
\end{itemize}
The following bound can be given on the mean integrated squared error:
\begin{theorem}
\label{Schranke_X}  Let $\kf$ be supported on $[-1,1]$.  Assume that (A1)-(A4) are satisfied and that  for some positive integer $p$,  $\E[|Y_1|^{2p}]<\infty$.
Assume, moreover, that $\phi_X''\phi_{\eps}$ is integrable. Then for some positive constant $C$ depending only on $p$, 
\begin{align}
& \Ebig{    \normz{f_X- \hat{f}_{{X_h}}  }^2  } 
 \leq  2 \normz {  f_X - \kf_h\ast f_X }^2  \\
& + \frac{C C_X  G(X,\eps,1,1/h) }{n}  \inth  \int_0^{|u|}\limits  \frac{1}{|\phi_\eps(z)|^2 }  \d z \d u 
+ C   G(X,\eps, p, 1/h)  \inth \bigg(\frac{1}{n}  \int_0^{|u|}\limits  \frac{1}{|\phi_Y(z)|^2 }  
\d z    \bigg)^p    \d u
,
\end{align}
with 
\[
\phi_Y(z)=\E[e^{iz Y_2} ] =\psi(0,z) = \phi_X(z)\phi_{\eps}(z)
\]
and 
\begin{align}
G(X,\eps,p,u)  := &( \|\phi''_X\phi_{\eps} \|_{\lk^1}  + \E[\eps^2 ] \| \phi_X\phi_{\eps} \|_{\lk^1}   +\|\phi_X' \phi_\eps\|_{\lk^2}^2 )^p 
+\Big( \intu  {|\frac{\partial}{\partial u_1} \log \psi(0,x) |^2 }  \d x  \Big)^p  \\
& +   \frac{ \E[|Y_1|^{2p} ] 1_{\{p\geq 2\}  } u^p }{ n^{p-1}  }
+  \E^{\frac{1}{2} }[|Y_1|^{2p} ] .  
\end{align}
\end{theorem}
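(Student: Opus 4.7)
The plan is to do everything on the Fourier side. Because $\kf$ is supported in $[-1,1]$, $\Fourier\kf_h$ is supported in $[-1/h,1/h]$, and Plancherel together with a standard bias--variance split yields
\begin{align*}
\E\|\hat f_{X_h}-f_X\|_{\lk^2}^2\le 2\|f_X-\kf_h\ast f_X\|_{\lk^2}^2+\frac{\|\Fourier\kf\|_\infty^2}{\pi}\inth\E|\hat\phi_X(u)-\phi_X(u)|^2\d u.
\end{align*}
Since the projection $w\mapsto w/\max\{1,|w|\}$ onto the closed unit disk is $1$-Lipschitz and $\phi_X$ lies inside the disk, $|\hat\phi_X(u)-\phi_X(u)|\le|\ecfmod(u)-\phi_X(u)|$, so everything reduces to bounding $\E|\ecfmod(u)-\phi_X(u)|^2$ pointwise in $u$.

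By Lemma~2.1, $\phi_X(u)=\exp A(u)$ with $A(u):=\int_0^u\partial_{u_1}\psi(0,u_2)/\psi(0,u_2)\d u_2$; let $\hat A(u):=\int_0^u\dpsiemp(0,u_2)/\psitilde(0,u_2)\d u_2$, so that $\ecfmod=\exp\hat A$. The exact identity $\ecfmod(u)-\phi_X(u)=\phi_X(u)\bigl(e^{\hat A(u)-A(u)}-1\bigr)$ invites a split on the good event $\Omega_n(u):=\{|\hat A(u)-A(u)|\le 1\}$: on $\Omega_n(u)$, $|e^{\hat A-A}-1|\le e|\hat A-A|$, while on $\Omega_n(u)^c$ I use the a priori bound $|\hat\phi_X-\phi_X|\le 2$ together with Markov at order $2p$. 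This yields
\begin{align*}
\E|\hat\phi_X(u)-\phi_X(u)|^2\le e^2|\phi_X(u)|^2\E|\hat A(u)-A(u)|^2+4\E|\hat A(u)-A(u)|^{2p},
\end{align*}
whose two summands, once integrated over $|u|\le 1/h$, will become the second and third terms in the stated bound.

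The remaining analytic work is to estimate $\E|\hat A(u)-A(u)|^{2q}$ for $q\in\{1,p\}$. I decompose the integrand as $\dpsiemp/\psitilde-\partial_{u_1}\psi/\psi=(\dpsiemp-\partial_{u_1}\psi)/\psi+\dpsiemp(1/\psitilde-1/\psi)$. For the first piece, $\E|\int_0^u T_1\d u_2|^2$ is expanded via Fubini into a double integral driven by the covariance kernel $\E[Y_1^2 e^{i(u_2-u_2')Y_2}]$; the identity $\partial_{u_1}^2\psi(0,v)=-\phi_X''(v)\phi_\eps(v)+\E[\eps^2]\phi_X(v)\phi_\eps(v)$, in which (A3) kills the crossed $\phi_X'(v)\phi_\eps'(0)$ term, combined with Parseval produces precisely the $\|\phi_X''\phi_\eps\|_{\lk^1}$, $\E[\eps^2]\|\phi_X\phi_\eps\|_{\lk^1}$ and $\|\phi_X'\phi_\eps\|_{\lk^2}^2$ contributions that appear in $G$. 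For the second piece I exploit the Neumann bound $|\psitilde|\ge n^{-1/2}$ and distinguish the regular event $\{|\hat\psi-\psi|\le|\psi|/2\}$, on which $\psitilde=\hat\psi$ and $|1/\psitilde-1/\psi|\le 2|\hat\psi-\psi|/|\psi|^2$, from its complement, which is handled by Rosenthal's inequality at order $2p$ and furnishes the remaining $\int_0^{|u|}|\partial_{u_1}\log\psi(0,x)|^2\d x$, $\E[|Y_1|^{2p}]n^{1-p}u^p$ and $\E^{1/2}[|Y_1|^{2p}]$ summands of $G$. Finally, assumption (A4) lets me absorb the prefactor $|\phi_X(u)|^2$ into the integrand: since $|\phi_X(u)|\le C_X|\phi_X(u_2)|$ for $u_2\le u$, one has $|\phi_X(u)|^2/|\phi_Y(u_2)|^2\le C_X^2/|\phi_\eps(u_2)|^2$, which is precisely why the linear variance term of the bound carries $|\phi_\eps|^{-2}$ rather than $|\phi_Y|^{-2}$; the $(2p)$-th moment remainder, lacking this absorption, keeps the $|\phi_Y|^{-2}$ factor.

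The main obstacle is the $(2p)$-th moment control: the threshold in $\psitilde$, the deterministic ratio $\partial_{u_1}\psi/\psi$ and the empirical derivative $\dpsiemp$ couple nonlinearly inside the integral defining $\hat A$, so one has to combine Rosenthal-type moment inequalities, a careful case analysis on the regularity event $\{|\hat\psi-\psi|\le|\psi|/2\}$ and the thresholding bound $|\psitilde|\ge n^{-1/2}$ sharply enough to recover exactly the collection of terms packaged into $G(X,\eps,p,u)$ rather than just crude upper bounds.
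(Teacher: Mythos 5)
Your overall architecture coincides with the paper's: Plancherel plus bias--variance split, the $1$-Lipschitz property of the projection onto the unit disk to replace $\hat\phi_X$ by $\ecfmod$, the identity $\ecfmod-\phi_X=\phi_X(e^{\Delta}-1)$ with $\Delta:=\hat A-A$, the split on $\{|\Delta|\le 1\}$, the decomposition of $\dpsiemp/\psitilde-\dpsi/\psi$ into a ``centered numerator over true denominator'' piece and a ``derivative times inverse-remainder'' piece, Rosenthal, the computation of $\partial_{u_1}^2\psi(0,\cdot)$ via (A3), and the use of $|\psitilde|\ge n^{-1/2}$. Assumption (A4) entering to trade $|\phi_X(u)|^2/|\phi_Y(z)|^2$ for $C_X^2/|\phi_\eps(z)|^2$ is also exactly as in the paper.

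There is, however, a genuine gap in the step where you pass to the headline inequality $\E|\hat\phi_X-\phi_X|^2\le e^2|\phi_X|^2\,\E|\Delta|^2+4\,\E|\Delta|^{2p}$ and then ``estimate $\E|\Delta|^{2q}$ for $q\in\{1,p\}$''. The difficulty is in the \emph{unrestricted} $2p$-th moment. Write, as in the paper, $\Delta=\Delta_1+\Delta_2+\Delta_3$ with $\Delta_3=\int_0^u(\dpsiemp-\dpsi)\,R$, $R=1/\psi-1/\psitilde$ (or keep your two-term split; the same product $\hat c\,R$ appears). Since $\hat c$ and $R$ are built from the same sample, any Cauchy--Schwarz bound of $\E|\hat c\,R|^{2p}$ splits into $\E^{1/2}|\hat c|^{4p}\cdot\E^{1/2}|R|^{4p}$, and $\E|\hat c|^{4p}$ by Rosenthal forces $\E[|Y_1|^{4p}]<\infty$, strictly stronger than the hypothesis $\E[|Y_1|^{2p}]<\infty$. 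The paper sidesteps this in Lemma 5.2 by \emph{keeping} the indicator restrictions and localizing on the events $A_j=\{|\Delta|>1,\argmax_k\Delta_k=j\}$ and $B_j=\{|\Delta|\le 1,\argmax_k\Delta_k=j\}$; on $B_3$ the bound $|\Delta|\le 1$ converts $|\Delta|^{2p}$ into $|\Delta|^p$ and on $A_3$ the bound $\Delta_3>1/3$ converts $|\Delta|\mathbf 1_{A_3}$ into a multiple of $\Delta_3^p$, so in both regimes only $\E[\Delta_3^p]$ is required, which needs precisely $\E[|Y_1|^{2p}]$ and produces the $\E^{1/2}[|Y_1|^{2p}]$ factor in $G$. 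Your inequality $4\,\E|\Delta|^{2p}$ discards exactly the localization that makes the stated moment assumption sufficient, so as written the argument either needs $\E[|Y_1|^{4p}]<\infty$ or a different $G$.

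A minor slip: the identity from Lemma A.1 with (A3) reads $\partial^2_{u_1}\psi(0,v)=\phi_X''(v)\phi_\eps(v)-\E[\eps^2]\phi_X(v)\phi_\eps(v)$, opposite in sign to what you wrote (harmless if you intended $\E[Y_1^2e^{i(x-y)Y_2}]$, but worth fixing). Also, Parseval in the variance-of-$\Delta_1$ computation yields only $\|\phi_X''\phi_\eps+\E[\eps^2]\phi_X\phi_\eps\|_{\lk^1}$; the $\|\phi_X'\phi_\eps\|_{\lk^2}^2$ summand in $G$ does not arise from that step, so you should not promise it there.
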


In analogy with (A4), we impose the following assumption on the characteristic function of the errors: 
\begin{enumerate}
\item[(A5)] For some positive constant  $C_{\eps}$, 
\[
\forall u,v \in \R_+: \  ( v\leq u ) \Rightarrow ( |\phi_{\eps}(u)|\leq C_{\eps}|
\phi_{\eps}(v)|).
\]
\end{enumerate}
The following bound can then be given on the mean integrated squared error of $\hat{f}_{\eps}$: 
\begin{theorem}\label{Schranke_eps} Assume that $\kf$ is supported on $[-1,1]$  and the assumptions (A1)-(A5) are met. Assume, moreover, 
that for some positive integer $q\geq 2$,  $\E[|Y_1|^{4q}]$ is finite. Then for some positive constant  $C$ depending only on $q$, 
\begin{align}
& \Ebig{\normz{f_{\eps} - \hat{f}_{{\eps_h }}   }^2 }
\leq \normz{ f_{\eps}  - \kf_h\ast f_{\eps}  }^2   +CC_{\eps} \Big[    \frac{ G(X,\eps,1,1/h)}{n }\inth \intu \frac{1}{|\phi_X(z)|^2 }  \d z  \d u  \\
&+\frac{G(X,\eps,q,1/h)}{n^{q-1}  }   \inth \frac{1}{|\phi_X(u)|^2 } 
\bigg( \intu \frac{1}{|\phi_X(z)|^2}   \d z  \bigg) \bigg( \intu \frac{1}{|\phi_Y(z)|^2 }  \d z   
 \bigg)^{q-1} \d u  \\
&  +\frac{G(X,\eps,2,1/h)^{1/2}  }{n^2}  \inth \frac{1}{|\phi_X(u)|^2 }  \bigg( 
\intu    \frac{1}{|\phi_Y(z)|^2}  \d z  \bigg)   \d u\\  
& + \frac{G(X,\eps,2q,1/h) ^{1/2} }{n^q}  \inth \frac{1}{|\phi_X(u)|^4 }  
 \bigg( \intu \frac{1}{|\phi_Y(z)|^2}  \d z   \bigg)^q\d u +  \frac{1}{n^2} \inth \frac{1}{|\phi_X(u)|^4}  \d u \Big].
\end{align} 
\end{theorem}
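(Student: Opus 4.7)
The plan is to parallel the proof of Theorem \ref{Schranke_X}, adding one extra layer of analysis to handle the ratio structure $\hat\phi_\eps(u) = \hat\psi(0,u)/\tilde\phi_X(u)$.  First, by Plancherel and the assumption that $\mathcal{F}\kf$ is supported in $[-1,1]$,
\[
\E\normz{f_\eps - \hat{f}_{{\eps_h}}}^2 = \normz{f_\eps - \kf_h \ast f_\eps}^2 + \frac{1}{2\pi}\inth |\mathcal{F}\kf_h(u)|^2\, \E|\hat\phi_\eps(u) - \phi_\eps(u)|^2\, du,
\]
so it is enough to bound $\E|\hat\phi_\eps(u) - \phi_\eps(u)|^2$ pointwise on $[-1/h,1/h]$ and integrate.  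Adding and subtracting $\psi(0,u)/\tilde\phi_X(u)$ yields the key decomposition
\[
\hat\phi_\eps(u) - \phi_\eps(u) = \frac{\hat\psi(0,u) - \psi(0,u)}{\tilde\phi_X(u)} + \phi_\eps(u)\cdot\frac{\phi_X(u) - \tilde\phi_X(u)}{\tilde\phi_X(u)},
\]
in which the first summand is driven by the empirical fluctuations of $\hat\psi(0,u)$ (for which $\E|\hat\psi(0,u) - \psi(0,u)|^{2p} \leq C_p/n^p$ for $p \leq 2q$, by Rosenthal's inequality under $\E|Y_1|^{4q} < \infty$), and the second summand transfers the analysis onto $\hat\phi_X - \phi_X$, already controlled in the proof of Theorem \ref{Schranke_X}.

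Following \cite{Neumann}, I would split the expectation on the good event
\[
\Omega_u = \bigl\{|\hat\phi_X(u) - \phi_X(u)| \leq \tfrac12|\phi_X(u)|\bigr\} \cap \bigl\{|\phi_X(u)| \geq 2/\sqrt n\bigr\}
\]
and its complement.  On $\Omega_u$ the truncation inside $\tilde\phi_X$ is inactive, so $\tilde\phi_X = \hat\phi_X$ and $|\tilde\phi_X| \geq |\phi_X|/2$; the two summands then contribute at most $4|\hat\psi-\psi|^2/|\phi_X|^2$ and $4|\phi_\eps|^2|\hat\phi_X-\phi_X|^2/|\phi_X|^2$ respectively.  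Inserting the pointwise bound on $\E|\hat\phi_X(u)-\phi_X(u)|^2$ from the proof of Theorem~\ref{Schranke_X} and using assumption (A5) together with $|\phi_\eps| \leq 1$ to convert the $1/|\phi_Y|^2$ factors appearing there into the $1/|\phi_X|^2$ factors of the statement, integration against $|\mathcal{F}\kf_h|^2$ produces the first two variance terms, with the $C_\eps$ prefactor emerging precisely from (A5).  On the complement $\Omega_u^c$ I invoke the deterministic truncation bound $1/|\tilde\phi_X| \leq \sqrt n$ together with H\"older's inequality in the form $\E[Z\,1_{\Omega_u^c}] \leq (\E Z^r)^{1/r}\PP(\Omega_u^c)^{1-1/r}$.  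The probability of the bad event is controlled by Markov applied to $q$-th and $2q$-th moment bounds on $\hat\phi_X - \phi_X$ (derived along the same lines as the second moment, but with higher exponents in the Rosenthal step), producing the extra factors $1/n^{q-1}$ and $1/n^q$ together with the higher-order $G$-factors of the third and fourth terms.  The last term $(1/n^2)\inth |\phi_X(u)|^{-4}\, du$ captures the contribution on $\{|\hat\phi_X(u)| < 1/\sqrt n\} \cap \{|\phi_X(u)| \geq 2/\sqrt n\}$, where the truncation inside $\tilde\phi_X$ is active although the true $\phi_X$ is not yet small; it arises from Markov with the second moment of $\hat\phi_X - \phi_X$ and is the price paid for the regularization.

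The main obstacle is the pointwise $r$-th moment control of $\hat\phi_X(u) - \phi_X(u)$ for $r = 2$, $r = q$ and $r = 2q$.  One linearises the exponential in the definition of $\ecfmod$ via $|e^a - e^b| \leq |a-b|(e^{|a|}\vee e^{|b|})$, writes the integrand $\dpsiemp/\psitilde - \dpsi/\psi$ as $(\dpsiemp - \dpsi)/\psitilde + \dpsi(\psi - \psitilde)/(\psitilde\psi)$, applies Cauchy--Schwarz inside the $\int_0^u$, and then performs a second good/bad event decomposition for $\tilde\psi(0,\cdot)$.  The resulting integrands are bounded using (A3)--(A4), the integrability of $\phi_X''\phi_\eps$ and the moment assumption on $|Y_1|$.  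It is the careful bookkeeping across these two layers of H\"older on bad events that produces the specific combinations $G(X,\eps,q,1/h)/n^{q-1}$, $G(X,\eps,2q,1/h)^{1/2}/n^q$ and $G(X,\eps,2,1/h)^{1/2}/n^2$ appearing in the statement.
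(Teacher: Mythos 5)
Your overall architecture matches the paper's: Plancherel plus support of $\kf$, then a pointwise bound on $\E|\hat\phi_\eps(u)-\phi_\eps(u)|^2$, then a good/bad event split with deterministic truncation bounds on the bad event. But the route through the middle diverges, and the shortcut you take in the middle paragraph hides a real gap.

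The paper does not reuse the $\E|\hat\phi_X(u)-\phi_X(u)|^2$ bound from Theorem~\ref{Schranke_X}. Instead it proves a separate Lemma (their Lemma 5.3) bounding $\E\bigl|\tfrac1{\phi_X(u)}-\tfrac1{\tilde\phi_X(u)}\bigr|^{2p}$ directly, feeding in Lemma 5.2's bounds on $\E[|\Delta(u)|^{2p}1_{\{|\Delta|\le1\}}]$ and $\E[|\Delta(u)|1_{\{|\Delta|>1\}}]$. This is why the exact exponents $q$, $2q$, $2$ and the powers of $n$ emerge cleanly: they come from tracking $\Delta=\Delta_1+\Delta_2+\Delta_3$ through Rosenthal/Cauchy--Schwarz/H\"older at various orders, not from Markov applied to $\hat\phi_X-\phi_X$ itself. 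Your proposed three-term decomposition $\psi/\phi_X - \hat\psi/\phi_X + \hat\psi(1/\phi_X-1/\tilde\phi_X)$ is replaced in the paper by the three-term split with $\phi_X$ in the denominator of the first term, which is deterministic, so no event split is needed there; the randomness is concentrated in the $1/\phi_X-1/\tilde\phi_X$ factor. Your two-term decomposition with $\tilde\phi_X$ in both denominators forces you to handle the random denominator twice, which is strictly harder.

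The genuine gap is the linearization $|e^a-e^b|\le|a-b|(e^{|a|}\vee e^{|b|})$. The factor $e^{|a|}\vee e^{|b|}$ is precisely the unbounded exponential that the truncation $\hat\phi_X=\ecfmod/\max\{1,|\ecfmod|\}$ is built to neutralize; invoking it reopens the Bernstein-type analysis and bounded-support assumptions that the whole construction is designed to avoid. The paper instead uses only $|1-e^z|\le 2|z|$ for $|z|\le1$ on $\{|\Delta(u)|\le1\}$, together with the trivial truncation bound $|\hat\phi_X-\phi_X|\le2$ on $\{|\Delta(u)|>1\}$, so that no unbounded exponential ever appears. Relatedly, $\E[|\hat\phi_X(u)-\phi_X(u)|^{2q}]$ does not decay like $n^{-q}$ unconditionally — since $|\hat\phi_X-\phi_X|\le2$ one only gets $\E[|\hat\phi_X-\phi_X|^{2q}]\le4^{q-1}\E[|\hat\phi_X-\phi_X|^{2}]$ without the event split, so the ``Markov with higher moments of $\hat\phi_X-\phi_X$'' step is not sound as written. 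To repair it you would need to pre-condition on $\{|\Delta|\le1\}$, at which point you have effectively recreated Lemma 5.2 and Lemma 5.3 of the paper. Your good event $\Omega_u=\{|\hat\phi_X-\phi_X|\le\frac12|\phi_X|\}\cap\{|\phi_X|\ge2/\sqrt n\}$ is a reasonable Neumann-style alternative to the paper's $\{|\Delta(u)|\le1\}$ and would probably work with the same amount of bookkeeping, but the paper's event is more directly adapted to the exponential structure of $\ecfmod$.

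Minor: $\hat\psi(0,u)-\psi(0,u)$ is an average of bounded i.i.d.\ variables, so its $2p$-th moments are $O(n^{-p})$ with no moment assumption on $Y_1$; the moment condition $\E|Y_1|^{4q}<\infty$ is needed for $\dpsiemp$ and for the $G$-factors, not here.
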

\   \\
\textbf{Discussion}
It is easily seen that the assumptions (A4) and (A5) are not very restrictive.  They are met, for example, for normal or mixed normal distributions, Gamma distributions, bilateral Gamma distributions and many others. By a location shift, one can always ensure that $\E[\eps]=0$.

The integrability condition on $\phi_X''\phi_\eps$ is also very mild. Under the above assumptions, it is automatically met if $\phi_\eps$ is integrable but can also be checked in most other cases. 

The upper bound in Theorem 3.1 differs from the bounds which are commonly found in deconvolution problems in two ways: For one thing, there appears an additional inner integral in the variance term. This could be a consequence of the two-dimensional nature of the underlying problem.  On the other hand, it is completely unexpected to find, in the second variance term, the characteristic function of the target density appearing in the denominator.  On an intuitive level, this phenomenon could be understood as follows:  To draw inference on $X$ some information on the noise is required. However, in comparison to standard deconvolution problems, $\eps$ is itself an unobservable quantity and is contaminated by $X$. Consequently, $X$ does not only play the role of a random variable of interest but also, with respect to the error term, the role of a contamination.
This might explain the occurrence of $\phi_X$ in the denominator.  

\subsection{Rates of convergence}
In what follows, we derive rates of convergence under regularity assumptions on the target density  $f_X$ and on the density  $f_{\eps}$ of the noise.  For sake of simplicity, we assume in this section that $\kf$ is the sinc-kernel, $\Fourier \kf=1_{[-1,1]}$. 

Let us introduce some notation: For $\rho, C_1>0$, $\beta, c\geq 0$, $C_2\geq 1$, we  denote by $\mathcal{F}^u(C_1,C_2,c,\beta, \rho)$ the class of  square integrable densities $f$ such that the  characteristic function $\phi(\cdot)= \int e^{i\cdot x} f(x) \d x$ satisfies 
\begin{equation} \label{Monotonie}
\forall u,v \in \R^+:\    \       (u\geq v) \Rightarrow ( |\phi(u)|\leq C_2 |\phi(v)| )
\end{equation}
and
\begin{equation} 
 \forall u\in \R: \   \   |\phi(u)|  \leq  (1+C_1 |u|^2)^{-\frac{\beta}{2} } e^{ -c|u|^{\rho}  } .
\end{equation}
If $c=0$, the functions collected in $\mathcal{F}^u(C_1,C_2,c,\beta, \rho)$ are called 
\textit{ordinary smooth}. For $c>0$, they are called \textit{supersmooth}. 
By  $\mathcal{F}^\ell(C_1,C_2,c,\beta, \rho)$, we denote the class of square integrable densities for which \eqref{Monotonie} holds and, in addition, 
\begin{equation} 
\forall u\in \R: \    \    |\phi(u)| \geq   (1+C_1 |u|^2)^{-\frac{\beta}{2} } e^{ -c|u|^{\rho}  } .
\end{equation}
For $C_3>0$, we denote by $\mathcal{G}(C_3,p)$ the class of pairs $(f_{X} , f_{\eps}  )$ of square integrable densities for which the following conditions are met:  For the characteristic function $\phi_X$ of $f_X$ and $\phi_\eps$ of $f_\eps$, 
\begin{align}
( \| \phi_X'' \phi_\eps    +  \E[\eps^2 ] \phi_X \phi_\eps  \|_{\lk^1 }   +  \|\phi_X'\phi_\eps  \|_{\lk^2 }^2 )^p
+\E[|X+\eps|^{2p}  ]  \leq C_3
\end{align}
holds and moreover,  $(\log  \phi_{X+\eps} )'$ is square integrable, with 
\begin{align}
 \|  (\log  \phi_{X+\eps} )'  \|_{\lk^2}^{2p}  \leq C_3.
\end{align}
Finally, we use the short notation 
\begin{align}
\mathcal{F}^{u,\ell}(X,\eps,p) =\Big[ \mathcal{F}^u(C_{1,X}, C_{2,X},c_X,\beta_X, \rho_X) \times \mathcal{F}^{\ell} (C_{1,\eps}, C_{2,\eps},c_\eps,\beta_\eps, \rho_\eps) \Big]  \cap \mathcal{G}(C_3,p)
\end{align}
and 
\begin{align}
\mathcal{F}^{\ell,u}(X,\eps,p) =\Big[ \mathcal{F}^{\ell}(C_{1,X}, C_{2,X},c_X,\beta_X, \rho_X) \times \mathcal{F}^{u } (C_{1,\eps}, C_{2,\eps},c_\eps,\beta_\eps, \rho_\eps) \Big]  \cap \mathcal{G}(C_3, p ).
\end{align}
\subsubsection*{Estimation of the target density}
We start by providing rates of convergence for the estimation of $f_X$.  Let $p\geq 2$. We may limit the considerations to bandwidths $h\geq n^{-1/2}$, so the term $\frac{u^p}{n^{p-1} }$ appearing in the definition of $G(X, \eps, p,u)$ is readily negligible.  We consider three different cases:\\[0,2cm]
\textbf{Case I:} Ordinary smooth density with ordinary smooth errors , $c_X = c_{\eps}=0, \beta_X >1/2, \beta_{\eps}>1/2$. 

Then  the choice of the kernel, Theorem \ref{Schranke_X} and the definition of $\mathcal{F}^{u,\ell}(X,\eps,p) $ give
\begin{align}
\sup_{(f_X,f_\eps)\in \mathcal{F}^{u,\ell}(X,\eps,p) }  \Ebig{  \|f_X -\hat{f}_{X,h}\|_{\lk^2}^2   } = O (r_{n,h} )  
:= O \Big( (1/ h)^{\gamma_1} +\frac{1}{n} (1/h)^{\gamma_2}    + \frac{1}{n^p} (1/h)^{\gamma_3}   \Big)
\end{align}
with 
\begin{align}
\gamma_1= \m 2\beta_X +1, \  \ \gamma_2= 2\beta _{\eps} +2,\   \
\gamma_3 = p(2\beta_X+2\beta \eps +1)+1.
\end{align}
Minimizing $r_{n,h}$ with respect to $h$ yields for the optimal bandwidth $h^*$, 
\[
1/h^* \asymp  n^{ \frac{ 1}{2 \beta_{\eps}  +2(1+1/p) \beta_X + 1}  }.
\]
Plugging $h^*$ in gives 
\begin{align}
 \sup_{(f_X,f_\eps)\in \mathcal{F}^{u,\ell} (X,\eps,p) } \Ebig{  \|f_X -\hat{f}_{X,h^*}\|_{\lk^2}^2   }
= O \Big(  n^{-\frac{  (2\beta_X - 1) }{2 \beta_{\eps}  +2(1+1/p) \beta_X + 1}  }   \Big). 
\end{align} 
\textbf{Case II} Ordinary smooth density with supersmooth errors, $\beta_X>1/2, c_X=0, c_{\eps} >0$.
Then
\begin{align}
\sup_{(f_X,f_\eps)\in \mathcal{F}^{u,\ell} (X,\eps,p) }&  \Ebig{  \|f_X -\hat{f}_{X,h}\|_{\lk^2}^2   }  =O(r_{n,h} )  \\
&:=  O \Big( (1/h)^{\gamma_1}  + \frac{1}{n}  (1/h)^{\gamma_2 }  
\exp( 2 c_{\eps} (1/h)^{\rho_{\eps}  }  )   +\frac{1}{n^p}  (1/h)^{\gamma_3 }\exp(2 p  c_{\eps} (1/h)^{\rho_{\eps}  }  ) \Big),
\end{align}
with 
\begin{align}
\gamma_1=-2\beta_X+1 ,\    \
\gamma_2= [ (2\beta_{\eps} +1-\rho_{\eps} )_+ +1-\rho_{\eps}  ]_+, \   \  
\gamma_3 = [ p (2\beta_{\eps}+2\beta_X +1-\rho_{\eps}  )_+ +1-\rho_\eps ]_+.
\end{align}
Selecting $h^*$ as the minimizer of $r_{n,h}$ gives 
\begin{align}
1/h^* = \left(  \frac{1}{2 c_{\eps} }  (\log n) -\frac{1}{2 c_{\eps}  }  \log (\log n)^{\gamma }  +O(1)  \right)^{1/\rho_{\eps}  }.
\end{align}
with 
\begin{align}
\gamma = \max\{ \frac{ \gamma_2}{\rho_\eps }  +2\beta_X -1 , 1/p( \frac{ \gamma_3}{\rho_\eps}  +2\beta_{X} - 1)  \}.   
\end{align}
From this we derive that 
\begin{align}
\sup_{(f_X,f_\eps)\in \mathcal{F}^{u,\ell} (X,\eps,p) }  \Ebig{  \|f_X -\hat{f}_{X,h^*}\|_{\lk^2}^2   }   = O \left( (\log n)^{- \frac{2 \beta_X  -1}{\rho_{\eps}    }  }  \right). 
\end{align}
\textbf{Case III} Supersmooth density with ordinary smooth errors, $c_X >0, c_{\eps} = 0, \beta_\eps >1/2$.  In this case, 
\begin{align}
\sup_{(f_X,f_\eps)\in \mathcal{F}^{u,\ell} (X,\eps,p) }&  \Ebig{  \|f_X -\hat{f}_{X,h^*}\|_{\lk^2}^2   }   = O(r_{n,h} ) \\
&  := 
O \bigg(   (1/h)^{\gamma_1  }  \exp( - 2  c_X (1/h)^{\rho_X}  )    +\frac{1}{n} (1/h)^{\gamma_2}  +\frac{1}{n^p} 
(1/h)^{ \gamma_3}  \exp ( 2 p c_X (1/h)^{\rho_X} \bigg),
\end{align}
with 
\begin{align}
\gamma_1= - 2\beta_X +1- \rho_X ,\   \    \gamma_2= 2\beta_\eps+2,
\    \   \gamma_3= [ p (2\beta_X+2\beta_{\eps}+1-\rho_X )_+  + 1-\rho_X]_+ .
\end{align}
Minimizing $r_{n,h}$ yields 
\begin{align}
1/h^* = \left(  \frac{p}{2 c_{X} (p+1) }  (\log n) -\frac{1}{2 c_{X}  }  \log (\log n)^{\gamma}  +O(1)  \right)^{1/\rho_{X}  }
\end{align}
with 
\begin{align}
\gamma = \frac{\gamma_3 - \gamma_1}{(p+1)\rho_X}
 \end{align}
Then it holds that 
\begin{align}
\sup_{(f_X,f_\eps)\in \mathcal{F}^{u,\ell} (X,\eps,p) } &  \Ebig{  \|f_X -\hat{f}_{X,h^*}\|_{\lk^2}^2   }   =
 O\left( n^{-\frac{p}{p+1}  } (\log n)^{ \frac{  p/(p+1) \gamma_1+1/(p+1)\gamma_3
 }{ \rho_X  }    }   \right).
\end{align}

 \subsubsection*{Estimation of the residuals} In analogy with the rates for the estimation of $f_X$, we consider the following different cases:\\[0,2cm]
\textbf{Case I} Both, $f_{\eps}$ and $f_X$ are ordinary smooth, $c_X= c_{\eps}=0, \beta_X>1/2, \beta_{\eps}>1/2$. Then by Theorem \ref{Schranke_eps}  and by the definition of $\mathcal{F}^{\ell, u}(X,\eps,p)$, 
\begin{align}
\sup_{(f_X,f_\eps)\in \mathcal{F}^{\ell,u} (X,\eps, p)} & \Ebig{  \| f_{\eps} - \hat{f}_{\eps, h  }  \|_{\lk^2}^2  } =O ( r_{n,h}) \\
& :=  O\left((1/h)^{\gamma_1}  + \frac{1}{n} (1/h)^{\gamma_2} +\frac{1}{n^2}(1/h)^{\gamma_3} +\frac{1}{n^{p-1}  } (1/h)^{\gamma_4}  +\frac{ 1}{n^p} (1/h)^{\gamma_5} \right),
\end{align}
with 
\begin{align}
&\gamma_1 = - 2\beta_\eps +1,  \    \  \gamma_2= 2\beta_X+2,   \   \
\gamma_3 = 4\beta_X+2\beta_\eps +2, \   \  \gamma_4 =2(p+1)\beta_X + 2(p-1)\beta_{\eps} + p+1,\\
& \gamma_5=  2(p+2)\beta_X +2p\beta_{\eps} + p+1. 
\end{align}
Minimizing  $r_{n,h}$ gives 
\[
1/h^*\asymp  n^{ \frac{1}{ 2(1+2/(p-1) )\beta_X + 2(1+1/(p-1) ) \beta_{\eps} + 1+1/(p-1) }  }.
\]
Consequently,
\begin{align}
\sup_{(f_X,f_\eps)\in \mathcal{F}^{\ell,u} (X,\eps, p)}  \Ebig{  \| f_{\eps} - \hat{f}_{\eps, h^*  }  \|_{\lk^2}^2  } =O\left(n^{- \frac{2\beta_\eps - 1}{ 2(1+2/(p-1) )\beta_X + 2(1+1/(p-1) ) \beta_{\eps} + 1+1/(p-1) }  }   \right). 
\end{align}
\textbf{Case II}  Ordinary smooth  $f_{\eps}$   and supersmooth $f_X$,   $c_X>0, c_\eps=0,\beta_\eps>1/2$.  In this case, 
\begin{align}
 \sup_{(f_X,f_\eps)\in \mathcal{F}^{\ell,u} (X,\eps, p)}  &  \Ebig{  \| f_{\eps} - \hat{f}_{\eps, h  }  \|_{\lk^2}^2  } =O ( r_{n,h})  \\
 :=  &  O\bigg((1/h)^{\gamma_1} + \frac{1}{n} (1/h)^{\gamma_2} \exp(2 c_X (1/h)^{\rho_X} )    +\frac{1}{n^2} (1/ h) ^{\gamma_3   }\exp(4 c_X (1/h)^{\rho_X} )    \\
 &
 +\frac{1}{n^{p-1 } } (1/h)^{ \gamma_4}  \exp(  2c_X(p+1) (1/h)^{\rho_X}  ) 
+\frac{1  }{n^p}(1/h)^{\gamma_5} \exp( (2c_X(p+2) (1/h)^{\rho_X} ) \bigg) ,
\end{align}
with 
\begin{align}
& \gamma_1= \m 2\beta_\eps +1, \    \     \gamma_2 = [  (2\beta_{X}+1-\rho_X)_+  +1-\rho_X]_+, \\ &\gamma_3=[(2\beta_X+2\beta_\eps +1-\rho_X)_+   +2\beta_X +1-\rho_X]_+,  \\ 
& \gamma_4= [ (p-1)(2\beta_X +2\beta_\eps +1-\rho_X)_+ +(2\beta_X +1-\rho_X)_+ +2\beta_X+1-\rho_X]_+, \\
&     \gamma_5=   [  p(2\beta_X +2\beta_\eps +1-\rho_X)  +4\beta_X +1-\rho_X]_+.
\end{align}
Then 
\[
1/h^* = \left(  \frac{(p-1)}{ 2  c_X(p+1)}\log n     -  \frac{1}{2c_X} \log \left(  \log n\right)^{\gamma}  +O(1)
\right)^{1/\rho_X} 
\]
with  
\[
\gamma= 1/(p+1)  (\frac{\gamma_4}{\rho_X} +2\beta_\eps -1).
\]
 This implies 
\begin{align}
\sup_{(f_X,f_\eps)\in \mathcal{F}^{\ell,u} (X,\eps, p)}  \Ebig{  \| f_{\eps} - \hat{f}_{\eps, h^*  }  \|_{\lk^2}^2  } =O\left( (\log n)^{-\frac{ 2\beta_{\eps}  -1 }{\rho_X} }  \right). 
\end{align}
\textbf{Case III}  $f_{\eps}$ is supersmooth and $f_X$ is ordinary smooth. Then 
\begin{align}
&\sup_{(f_X,f_\eps)\in \mathcal{F}^{\ell,u} (X,\eps, p)}  \Ebig{  \| f_{\eps} - \hat{f}_{\eps, h  }  \|_{\lk^2}^2  } =O ( r_{n,h}) := O\bigg( (1/h)^{\gamma_1}  \exp( - 2 c_\eps (1/h)^{\rho_\eps}  )
+ \frac{1}{n } (1/ h) ^{\gamma_2}    \\
&+  \frac{ 1  }{n^2}(1/h)^{\gamma_3}  \exp(2 c_\eps (1/h)^{\rho_\eps} ) 
+\frac{1}{ n^{p-1}     } (1/h)^{\gamma_4}  \exp ( 2c_{\eps} (p-1) (1/h)^{\rho_{\eps} } )  
+\frac{(1/h)^{\gamma_5}}{n^p}   \exp ( 2c_{\eps} q (1/h)^{\rho_{\eps} } )\bigg) ,
\end{align}
with 
\begin{align}
&\gamma_1 = \m 2\beta_\eps +  1- \rho_\eps,  \    \   \gamma_2= 2\beta_X+2,  \    \   
\gamma_3 = [2\beta_X +2\beta_\eps +1-\rho_\eps)_+   +2\beta_X+1 -\rho_\eps ]_+ , \\
&  \gamma_4= [(p-1) (2\beta_X +2\beta_\eps +1-\rho_\eps)_+ +4\beta_X +2-\rho_\eps ]_+,\\  
& \gamma_5 = [p(2\beta_X+2\beta_\eps +1-\rho_\eps)_+ +4\beta_X+1-\rho_\eps ]_+.
\end{align}
We arrive at 
\begin{align}
1/h^*= \left(   \frac{(p-1)}{p 2c_\eps  } \log n   - \frac{1}{2c_\eps}  \log (\log n)^{\gamma}   \right)^{1/\rho_{\eps}  },
\end{align}
with 
\begin{align}
\gamma = \frac{  1/p ( \gamma_3 -\gamma_1)  }{\rho_{\eps}  }
\end{align}
which, in turn,  implies 
\begin{align}
\sup_{(f_X,f_\eps)\in \mathcal{F}^{\ell,u} (X,\eps, p)}  \Ebig{  \| f_{\eps} - \hat{f}_{\eps, h^*  }  \|_{\lk^2}^2  } =O\left(  n^{-\frac{p-1}{p}  }  (\log  n)^{ \frac{  \frac{p-1}{p}\gamma_1+ \frac{1}{p}\gamma_3  }{\rho_\eps} }    \right). 
\end{align}
\textbf{Discussion:}
 We have not considered  the case where both, the target density and the error density, are supersmooth. Deriving rates of convergence in this framework requires the consideration of  various different subcases, leading to rather tedious and cumbersome calculations. We omit the details and  refer to \cite{Lacour} for a detailed discussion on the subject.

\subsubsection*{Comparison to earlier results}
We have mentioned that the rates of convergence derived above differ substantially from the rate results given in \cite{Li_Vuong}. To illustrate this point, the rates are listed in the table below.

\begin{table}[h!]
 \setlength{\tabcolsep}{10pt}
\renewcommand{\arraystretch}{2.5}
\begin{center}
\begin{tabular}{|l||  c | c |  c|   }
\hline
\  & $c_X=0,\  c_{\eps}=0 $  & $c_X=0,\  c_{\eps}>0$ &$c_X>0,\  c_{\eps}=0 $ \\
\hline 
$\hat{f}_X $ &  $n^{-\frac{2\beta_X -1}{2(1+1/p)\beta_X +2\beta_\eps +1}  }$ & $(\log n)^{-\frac{2\beta_X-1}{\rho_\eps} }$ & $(\log n)^{\gamma} n^{-\frac{p}{p+1} } $  \\
\hline  
 $\hat{f}_X^{LV}$ &  $n^{-\frac{2\beta_X-1}{4\beta_X+6\beta_\eps+4} }$& $(\log n)^{-\frac{2\beta_X-1}{\rho_\eps} }$&$(\log n)^{\gamma} n^{-\frac{1}{3} } $  \\
\hline 
\end{tabular}
\caption{Rates of convergence for estimating the target density}
\end{center}
\end{table}
\begin{table}[h!]
 \setlength{\tabcolsep}{10pt}
\renewcommand{\arraystretch}{2.5}
\begin{center}
\begin{tabular}{|l||  c | c |  c|   }
\hline
\  & $c_X=0,\  c_{\eps}=0 $  & $c_X=0,\  c_{\eps}>0$ &$c_X>0,\  c_{\eps}=0 $ \\
\hline 
$\hat{f}_\eps $ &  $n^{- \frac{2\beta_\eps - 1}{ \frac{2(p+1)}{p-1} \beta_X + \frac{2(p+1)}{p-1}  \beta_{\eps} +\frac{p+1}{p-1} }  }$ & $(\log n)^{\gamma} n^{-\frac{p-1}{p} } $ & $(\log n)^{-\frac{2\beta_\eps -1}{\rho_X} }$  \\
\hline  
 $\hat{f}_\eps^{LV}$ &  $n^{-\frac{2\beta_X- 1}{6\beta_X+6\beta_\eps +4} }$& $(\log n)^{\gamma} n^{-\frac{1}{3} }$&$(\log n)^{-\frac{2\beta_\eps -1}{\rho_X} }$  \\
\hline 
\end{tabular}
\normalsize
\caption{Rates of convergence for estimating the noise density}
\end{center}
\end{table}
We need to be careful about the fact that the rates of convergence given in \cite{Li_Vuong} are derived under the assumption the moments of all orders and even all exponential moments are finite, which compares to $p=\infty$. There is no difference in the rate when an ordinary smooth target with supersmooth noise is being considered. In any other case, the gap in the rate is striking. 

It is interesting to note that the rates of convergence found in the present publication do also differ from the rates which have been found for estimators in the structurally simpler case of panel data with symmetric errors, see \cite{Delaigle_repeated} and \cite{Comte_repeated}. In the table below, $\hat{f}_X^{sym}$ is understood to be the estimator for the symmetric error case, defined according to \cite{Comte_repeated}
and $\eps \in (0,1/2)$ is arbitrary. 
\begin{table}[h!]
 \setlength{\tabcolsep}{10pt}
\renewcommand{\arraystretch}{2.5}
\begin{center}
\begin{tabular}{|l||  c | c |  c|   }
\hline
\  & $c_X=0,\  c_{\eps}=0 $  & $c_X=0,\  c_{\eps}>0$ &$c_X>0,\  c_{\eps}=0 $ \\
\hline 
$\hat{f}_X $ &  $n^{-\frac{2\beta_X -1}{2(1+1/p)\beta_X +2\beta_\eps +1}  }$ & $(\log n)^{-\frac{2\beta_X-1}{\rho_\eps} }$ & $(\log n)^{\gamma} n^{-\frac{p}{p+1} } $  \\
\hline  
 $\hat{f}_X^{sym}$ &  $n^{-\frac{2\beta_X-1}{2(\beta_X\vee \beta_\eps)+2\beta_\eps} }$& $(\log n)^{-\frac{2\beta_X-1}{\rho_\eps} }$&$(\log n)^{\gamma} n^{-1+\eps } $  \\
\hline 
\end{tabular}
\caption{Rates of convergence for estimating $f_X$, symmetric vs. non-symmetric error case.}
\end{center}
\end{table}

The convergence rates coincide if an ordinary smooth target density with supersmooth errors is being considered.  When both, $f_X$ and $f_\eps$ are ordinary smooth  and $\beta_X \geq \beta_\eps$ holds, $\hat{f}^{sym}$ attains the rate $n^{-\frac{2\beta_X - 1}{2\beta_X +2\beta_\eps}}$, which is known to be optimal in deconvolution problems. In this situation, $\hat{f}_X$ shows a slightly worse performance than $\hat{f}_X^{sym}$, which is not surprising in light of the fact that the model with non-symmetric errors has a more complicated structure.  However, it is certainly surprising to notice that for $\beta_\eps >> (1+1/p)\beta_X+1/2$, the rates for $\hat{f}_X$ are substantially better than the rates for $\hat{f}_X^{sym}$. 

\section{Simulation studies}
\subsection{Some data examples} 
For the practical choice of the smoothing parameter, we use  a  leave-p-out cross validation strategy. We consider the parameter set $M=\{1,\cdots, \sqrt{n}\}$ with each parameter $m$ corresponding to the bandwidth $1/m$. 
Given any subset $N:= \{n_1, \cdots, n_p\}\subseteq \{1,\cdots, n\}$ of  size p, we build an estimator $\hat{\phi}_X^{N}$ of  $\phi_X$ based on the subsample  $(Y_k)_{ k\in N}$, as well as an estimator $\hat{\phi}_X^{- N}$ based on   $(Y_k)_{ k\not \in N}$. For $m\in M$, we may  use 
\begin{equation}
\hat{\ell}(\phi_X,\hat{\phi}_{X,1/m}):= \frac{1}{{n \choose p}  }  \sum_{ N=\{n_1,\cdots, n_p \}  }  \| \phi_X^{N}\Fourier \kf_{1/\sqrt{n}}- \phi_X^{-N}\Fourier \kf_{1/m} \|_{\lk^2}^2 
\end{equation}
as an empirical approximation to the loss function 
\begin{equation}
\ell(\phi_X, \hat{\phi}_{X,1/m} )  = \|  \phi_X - \hat{\phi}_{X,1/m} \|_{\lk^2}^2.
\end{equation}
Minimizing the empirical loss leads to selecting 
\begin{equation}
\hat{m}= \argmin\{m\in M: \hat{\ell}(\phi_X, \hat{\phi}_{X,1/m}  ) \}, 
\end{equation}
and working with the bandwidth $\hat{h}= 1/\hat{m}$, thus defining the adaptive estimator $\hat{\phi}_X^{ad} = \hat{\phi}_{X,\hat{h}}$. 

Simulation experiments indicate that the procedure works reasonably well with $p=10$. However, it is evident that even for small  sample sizes, the complexity of the algorithm explodes and the procedure is numerically intractable. To deal with this problem, we use a modified algorithm. We subdivide $\{1,\cdots, n\}$ into $n/5$  disjoint blocks $B_1, \cdots, B_{n/5}$ of size 5 and  build our leave-10-out estimators, based on the subsets $N_k=B_k\cup B_{k+1}, k=1, \cdots, n/5-1$. 

We work with a Gaussian kernel and try the procedure for the following  target densities and  errors: 
\begin{enumerate}
\item[(i)] $X$ has a  $\Gamma(4,2)$ distribution and $\eps$ has a bilateral Gamma distribution with parameters $2,2,3,3$, that is, the corresponding density is the convolution of a $\Gamma(2,2)$-density, supported on $\R_+$ and a $\Gamma(3,3)$-density, supported on $R_{-}$.  In the sequel, we abbreviate this type of distributions by $b\Gamma(2,2,3,3)$. 
\item[(ii)] $X$ has a $b\Gamma(1,1,2,2)$-distribution and the errors are, up to a location shift, $\Gamma(4,2)$-distributed, that is, $\eps+2\sim \Gamma(4,2)$.  In the sequel, we write $\eps\sim \Gamma(4,2)-2$. (The location shift is necessary to ensure that $\E[\eps]=0$ holds true.) 
\item[(iii)]  $X$ has a standard normal distribution, $X \sim  \mathcal{N}(0,1)$  and $\eps  \sim b\Gamma(2,2,3,3)$. 
\item[(iv)] $X$ has again a standard normal distribution. $\eps$ is a mixture of two normal distributions with 
parameters $-2,1$ and $2,2$.  We use the notation $\eps \sim m\mathcal{N}(-2,1,2,2)$. 
\end{enumerate}

We use a Gaussian kernel and run the procedure for $n=100, 1000, 10000$ observations. Based on $500$ repetitions of the adaptive procedure, we calculate the empirical risk $\hat{r}^{ad}$ and compare this quantity to the empirical risk $\hat{r}^{or}$  of the "estimator" with oracle choice of the bandwidth. The values are summarized in the table below. \\
\   \\
\small
\begin{center}
 \setlength{\tabcolsep}{10pt}
\renewcommand{\arraystretch}{2}
\begin{tabular}{|l||  p{0,14\textwidth}|  p{0,14\textwidth} ||  p{0,14\textwidth}|  p{0,14\textwidth}  |   }
\hline
\  & \multicolumn{2}{|c||}{\hspace*{-0,2cm}{$X\sim \Gamma(4,2),\  \eps \sim b\Gamma(2,2,3,3)$ } }  & \multicolumn{2}{|c|}{\   $X\sim \mathcal{N}(0,1),  \eps \sim b\Gamma(2,2,3,3)$} \\
\hline
$n$  & \centering{$\hat{r}^{or}$} & \centering{$ \hat{r}^{ad}$}   & \centering{$\hat{r}^{or}$} &   {$ \hspace*{0,2cm}\hat{r}^{ad}$}  \\ 
\hline
100 & \centering{0.0151}  & \centering{0.0198}   & \centering{0.0104}  &\   0.0198  \\
\hline
1000 & \centering{0.0034}    & \centering{0.0076} &  \centering{0.0019}    &\   0.0044  \\
\hline
10000 & \centering{ 0.0015
}  &   \centering{0.0040} & \centering{0.0007} & \  0,0016 \\
\hline
\end{tabular}
\end{center}
\begin{center}
 \setlength{\tabcolsep}{10pt}
\renewcommand{\arraystretch}{2}
\begin{tabular}{|l||  p{0,14\textwidth}|  p{0,14\textwidth} ||  p{0,14\textwidth}|  p{0,14\textwidth}  |   }
\hline
\  & \multicolumn{2}{|c||}{ \hspace*{-0,3cm} $X\sim \mathcal{N}(0,1),  \eps\sim m\mathcal{N}(\text{-}2,1,2,2)$  }  & \multicolumn{2}{|c|}{ \hspace*{-0,2cm}$X\sim b\Gamma(1,1,2,2),   \eps\sim \Gamma(4,2)\text{-}2$  } \\
\hline
$n$  & \centering{$\hat{r}^{or}$} & \centering{$ \hat{r}^{ad}$}   & \centering{$\hat{r}^{or}$} & \   {$ \hat{r}^{ad}$}  \\ 
\hline
100 & \centering{0.0310}  & \centering{0.0410}   & \centering{ 0.0135}  &\   \  0.0172  \\
\hline
1000 & \centering{0.0118}    & \centering{0.0352} &  \centering{0.0027}    &\    0.0074 \\
\hline
10000 & \centering{0.0040}  & \centering{0.0067} & \centering{0.0013} &\  0.0038\\
\hline
\end{tabular}
\end{center}
\subsection{Comparison to the symmetric error case}
We have mentioned that so far, the model of repeated observations has mainly been studied under the additional assumption that the error terms are symmetric.  In Section 3, it turned out that our rates of convergence are, in some cases, better than the rate results presented in  \cite{Delaigle_repeated} or \cite{Comte_repeated}. So far, it is not clear if this gap in the rate is due to a sub-optimal upper bound in the mentioned papers or to a different performance of the estimators themselves. 

Simulation studies indicate that the estimator which has been designed to handle the case of skew errors does indeed outperform, in some cases, the standard estimator for the symmetric error case. 

Before having a look at some data examples, let us give a brief outline on the estimation strategy for the symmetric error case:  In the panel data model, suppose that $\eps$ has a symmetric distribution. In this case, 
\begin{equation}
Y_{j,1}-Y_{j,2} = \eps_{j,1}- \eps_{j,2}\overset{d}{=}  \eps_{j,1} +\eps_{j,2}, \  j=1, \cdots, n. 
\end{equation}
Consequently, $\phi_{Y_1-Y_2}=\phi_{\eps}^2$. An unbiased estimator of $\phi_{\eps}^2$ can then be built from the data set $(Y_{j,1}- Y_{j,2})_{j=1,\cdots, n}$. Taking square roots gives an estimator $\hat{\phi}_{\eps}$  of $\phi_{\eps}$ and a regularized version of this estimator is plugged in the denominator.  Again $\phi_Y$ can be estimated directly from the data.  For the details, we refer to \cite{Comte_repeated}.
In the sequel, we denote by $\hat{\phi}_X^{sym}$ the estimator for the symmetric error case. 

As indicated by the theory, it turns out that $\hat{\phi}_X$ performs substantially better than $\hat{\phi}_X^{sym}$ if the  error density is very smooth, in comparison to the target density. To illustrate this phenomenon, we have a look at the following examples: 
\begin{itemize}
\item[(i)] $X$ has a $\Gamma(2,4)$-distribution and $\eps \sim b\Gamma(3,2,3,2)$. 
\item[(ii)] $X\sim b\Gamma(1,2,1,2)$ an $\eps\sim b\Gamma(4,3,4,3)$. 
\end{itemize}
When we consider  target densities  which are very smooth, in comparison to the error density,  the estimator discussed in the present paper does, on small or medium sample sizes,  still perform slightly better than the estimator for the symmetric error case.  However, this difference in the performance is small and vanishes completely as the sample size increases.   For illustration, we consider the following  examples:
\begin{itemize}
\item[(iii)] $X\sim b\Gamma(4,3,4,3)$ and $\eps\sim b\Gamma(1,2,1,2)$. 
\item[(iv)] $X\sim \mathcal{N}(0,1)$ and $\eps \sim b\Gamma(3,5,3,5)$. 
\end{itemize}

In the table below, based on $500$ repetitions of the estimation procedure (with oracle choice of the bandwidth), we compare the empirical risk
$\hat{r}^{or}$  of $\hat{\phi}_X$ to the empirical risk  $\hat{r}^{sym, or}$ of $\hat{\phi}_X^{or}$. 
\   \\
\begin{center}
 \setlength{\tabcolsep}{10pt}
\renewcommand{\arraystretch}{2}
\begin{tabular}{|l||  p{0,14\textwidth}|  p{0,14\textwidth} ||  p{0,14\textwidth}|  p{0,14\textwidth}  |   }
\hline
\  & \multicolumn{2}{|c||}{$X\sim \Gamma(2,4),\  \eps \sim  b\Gamma(3,5,3,5)$  }  & \multicolumn{2}{|c|}{\   $X\sim b\Gamma(1,2,1,2)\  \eps \sim b\Gamma(4,3,4,3)$} \\
\hline
$n$  & \centering{$\hat{r}^{ or}$} & \centering{$ \hat{r}^{sym, or}$}   & \centering{$\hat{r}^{or}$} & \hspace*{0,4cm }  {$ \hat{r}^{sym, or}$}  \\ 
\hline
100 & \centering{0.09721}  & \centering{0.25311}   & \centering{0.04373}  & {0.12089}   \\
\hline
1000 & \centering{0.05917}    & \centering{0.15747} &  \centering{0.02442}    &{0.08018}   \\
\hline
10000 & \centering{0.03955 
}  & \centering{0.10378} & \centering{0.01491} &{0.05250} \\
\hline
\hline
\  & \multicolumn{2}{|c||}{ \   $X\sim b\Gamma(4,3,4,3), \  \eps\sim b\Gamma(1,2,1,2)$  }  & \multicolumn{2}{|c|}{\ $X\sim\mathcal{N}(0,1), \  \eps\sim b\Gamma(3,5,3,5)$  } \\
\hline
n  & \centering{$\hat{r}^{or}$} & \centering{$ \hat{r}^{sym,or}$}   & \centering{$\hat{r}^{or}$} & \hspace*{0,4cm }  {$ \hat{r}^{sym,or}$}  \\ 
\hline
100 & \centering{0.00930}  & \centering{0.01446}   & \centering{0.00631}  &\ {0.00903}   \\
\hline
1000 & \centering{0.0032}    & \centering{0.00424} &  \centering{0.00184}    &\    {0.00254}   \\
\hline
10000 & \centering{0.00057}  & \centering{0.00058} & \centering{0.00077} & \  {0.00071} \\
\hline
\end{tabular}

\end{center}
\textbf{Conclusion:} Our simulation studies indicate that our estimator is, in some cases, preferable to the estimation procedures designed for the symmetric error case. In other cases, the performance of both procedures is practically identical. 

However, if the errors are unknown it is clear that in practical applications, one cannot be sure if the symmetry assumption on the errors is satisfied, so we conclude that it is preferable, in either case, to work with the procedure which is designed for the non-symmetric case. 
\section{Proofs}
\subsection{Proof of Theorem \ref{Schranke_X} }
We start by providing some auxiliary results to prepare the proof of Theorem \ref{Schranke_X}.
In the sequel, we use the following short notation.
\begin{align}
& \rem(u) :=  \frac{1}{\psi(u)}-  \frac{1}{\psitilde(u)} ;   \    \  
\hat{c}(u):= \dpsiemp(0,u) - \dpsi(0,u);  \    \     \hat{c}_j(u) := 
i Y_{j,1}e^{iu Y_{j,2} }-  \dpsi(0,u); \\
&     \hat{b}(u):= \tilde{\psi}(0,u)  -\psi(0,u) \    \          \text{and} \   \  
\Psi\rq{}(u_2):= \frac{\partial}{\partial u_1} \log \psi(0,u_2).
\end{align}
Moreover, 
\begin{equation}
\Delta(u):=   \int_0^u \limits \left(   \frac{\dpsiemp(0,u_2)  }{\tilde{\psi}
(0,u_2)   }-   \frac{ \dpsi(0,u_2) }{\psi(0,u_2)   }   \right)  \d u_2.
\end{equation}
First, we consider the deviation of $1/\tilde{\psi}$ from its target: 
\begin{lemma} \label{Rest_psi}It holds that  for some positive constant $C$ depending on $p$,
\begin{align}
\Ebig{ \bigg|\frac{1}{\psi(0,u)}-\frac{1}{\tilde{\psi}(0,u)}    \bigg|^{2p}        } \leq  C \min \Big\{\frac{n^{-p} }{|\psi(0,u)|^{4p} },
\frac{1}{|\psi(0,u)|^{2p} }  \Big\}.
\end{align}
\end{lemma}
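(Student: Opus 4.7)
The plan is to prove the two bounds inside the minimum separately and then combine them. The starting observation is that the regularization forces $|\tilde{\psi}(0,u)| = \max\{|\hat{\psi}(0,u)|, n^{-1/2}\}$, so that $|1/\tilde{\psi}(0,u)| \le n^{1/2}$ deterministically and $|\tilde{\psi}(0,u) - \hat{\psi}(0,u)| \le n^{-1/2}$ pointwise. Combined with the Rosenthal-type moment inequality for the empirical characteristic function $\E[|\hat{\psi}(0,u) - \psi(0,u)|^{2q}] \le C_q n^{-q}$ (which follows from Rosenthal applied to the i.i.d.\ bounded summands $e^{iu Y_{j,2}}$), this yields $\E[|\tilde{\psi}(0,u) - \psi(0,u)|^{2q}] \le C_q n^{-q}$ for every integer $q \ge 1$.

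For the easier bound $1/|\psi(0,u)|^{2p}$, I would write
\[
\frac{1}{\psi(0,u)} - \frac{1}{\tilde{\psi}(0,u)} = \frac{\tilde{\psi}(0,u) - \psi(0,u)}{\psi(0,u)\,\tilde{\psi}(0,u)},
\]
and use $|\tilde{\psi}(0,u)| \ge n^{-1/2}$ to bound $|1/\psi - 1/\tilde{\psi}|^{2p} \le n^p\,|\tilde{\psi} - \psi|^{2p} / |\psi|^{2p}$. Taking expectation gives $\le C/|\psi(0,u)|^{2p}$.

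The sharper bound $n^{-p}/|\psi(0,u)|^{4p}$ requires a concentration argument, and I may restrict to the regime $|\psi(0,u)| \ge 2 n^{-1/2}$, since outside it the two bounds are comparable up to a constant. Introduce the good event $E^c := \{|\hat{\psi}(0,u) - \psi(0,u)| \le |\psi(0,u)|/2\}$. On $E^c$, we have $|\hat{\psi}(0,u)| \ge |\psi(0,u)|/2 \ge n^{-1/2}$, so $\tilde{\psi}(0,u) = \hat{\psi}(0,u)$ and $|\tilde{\psi}(0,u)| \ge |\psi(0,u)|/2$; the above identity then yields $|1/\psi - 1/\tilde{\psi}|^{2p} \le 4^p |\hat{\psi} - \psi|^{2p}/|\psi|^{4p}$, whose expectation is $\le C n^{-p}/|\psi(0,u)|^{4p}$ by the moment bound. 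On $E$, I use the deterministic estimate $|1/\psi - 1/\tilde{\psi}|^{2p} \le 2^{2p-1}(|\psi|^{-2p} + n^p) \le C n^p$ (valid because $|\psi(0,u)| \ge 2 n^{-1/2}$) together with the Markov-type control $\mathbf{1}_E \le (2|\hat{\psi} - \psi|/|\psi|)^{4p}$, and the $4p$-th moment bound gives $\E[\mathbf{1}_E \cdot Cn^p] \le C n^p \cdot n^{-2p}/|\psi|^{4p} = C n^{-p}/|\psi(0,u)|^{4p}$.

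The main subtlety will be the calibration of the Markov exponent on the bad event $E$: the deterministic contribution blows up as $n^p$, so I need at least a $4p$-th moment bound on $\hat{\psi} - \psi$ to absorb it. Any larger exponent also works, but the minimal choice $k=2$ keeps the calculation clean. Taking the minimum of the two bounds produced in the two regimes completes the proof.
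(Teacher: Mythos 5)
Your argument is correct and establishes both branches of the minimum, but the mechanism for the sharper branch $n^{-p}/|\psi(0,u)|^{4p}$ differs from the paper's. The paper works with the deviation $\hat b(u):=\tilde\psi(0,u)-\psi(0,u)$, shows $\E\big[|\hat b(u)|^{2p}\big]\le Cn^{-p}$ as you do, and then, for $|\psi(0,u)|\ge n^{-1/2}$, applies the algebraic split
\[
\Big|\frac{1}{\psi(0,u)}-\frac{1}{\tilde\psi(0,u)}\Big|^{2p}=\Big|\frac{\hat b(u)}{\psi(0,u)\tilde\psi(0,u)}\Big|^{2p}\le 4^p\Big(\frac{|\hat b(u)|^{2p}}{|\psi(0,u)|^{4p}}+\frac{|\hat b(u)|^{4p}}{|\psi(0,u)|^{4p}|\tilde\psi(0,u)|^{2p}}\Big),
\]
which comes from $1/\tilde\psi=1/\psi-\hat b/(\psi\tilde\psi)$; the second term is then controlled using $|1/\tilde\psi(0,u)|\le\sqrt n$ together with $\E\big[|\hat b(u)|^{4p}\big]\le Cn^{-2p}$. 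You instead run a concentration argument: introduce the good event $E^c=\{|\hat\psi-\psi|\le|\psi|/2\}$, on which $\tilde\psi=\hat\psi$ and $|\tilde\psi|\ge|\psi|/2$, and control the bad event via the Markov bound $1_E\le(2|\hat\psi-\psi|/|\psi|)^{4p}$ paired with the crude deterministic estimate $|1/\psi-1/\tilde\psi|^{2p}\le Cn^p$. Both routes consume the same $4p$-th moment input and are of equal strength; no step in your sketch would fail. Two minor remarks: your pointwise bound $|\tilde\psi-\hat\psi|\le n^{-1/2}$ uses that $\tilde\psi$ and $\hat\psi$ share the same phase, which holds by construction and is slightly sharper than the paper's estimate on $\{|\hat\psi|\le n^{-1/2}\}$; and your direct derivation of the easy bound $C/|\psi(0,u)|^{2p}$ valid for all $u$ is a bit cleaner than the paper's, which obtains it only in the regime $|\psi(0,u)|\le n^{-1/2}$ via a cruder triangle inequality.
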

\begin{proof}  Consider first the case where $|\psi(0,u)|\geq n^{-1/2}$.  We start by  observing that 
\begin{align}
\Ebig{|\hat{b}(u)|^{2p} }  \leq 4^p  \Big( \Ebig{ |{\psi}(0,u) -\hat{\psi}(0,u)|^{2p}   }   +  \Ebig{ |\hat{\psi}(0,u) -\tilde{\psi}(0,u)|^{2p}   }  \Big).
\end{align}
By Rosenthal's inequality, for some constant $C$ depending on $p$, 
\begin{align}
 \Ebig{ |{\psi}(0,u) -\hat{\psi}(0,u)|^{2p}   }  \leq \frac{C}{n^p}.
\end{align}
Moreover, by definition of $\tilde{\psi}$, 
\begin{align}
\Ebig{  |\hat{\psi}(0,u) -\tilde{\psi}(0,u)|^{2p}                                            } \leq \Ebig{\Big( |\hat{\psi}(0,u)| +n^{-1/2}\Big)^{2p}1_{\{|\hat{\psi}(0,u)|\leq n^{-1/2}\}  } }      \leq 4^p n^{-p }.                                       
\end{align}
Consequently, $\Ebig{ |\hat{b} (u)|^{2p} } \leq C n^{-p}$. 
Now, 
\begin{align}
 \bigg|\frac{1}{\psi(0,u)}-\frac{1}{\tilde{\psi}(0,u)}    \bigg|^{2p}    =  \bigg|\frac{\hat{b}(u)}{\psi(0,u)\tilde{\psi}(0,u)}   \bigg|^{2p}    \leq 4^p  \Big(  \frac{|\hat{b}(u)|^{2p}  }{|\psi(0,u)|^{4p} }   +   {\frac{{|\hat{b}(u)|^{4p} } }{|\psi(0,u)|^{4p} |\tilde{\psi}(0,u)|^{2p}  }  }\Big). 
\end{align}
We have 
\begin{align}
\Ebig{ \frac{|\hat{b}(u)|^{2p}  }{|\psi(0,u)|^{4p} }   }  \leq C \frac{n^{-p} }{|\psi(0,u)|^{4p} } 
\end{align}
and, since  $1/|\psi(0,u)| \leq \sqrt{n}$ by definition,
\begin{align}
\Ebig{ \frac{{|\hat{b}(u)|^{4p} } }{|\psi(0,u)|^{4p} |\tilde{\psi}(0,u)|^{2p}  }               }  \leq C \frac{n^{-p} }{|\psi(0,u)|^{4p}   }. 
\end{align}
On the other hand, for $|\psi(0,u)|\leq n^{-1/2}$,  we have the series of inequalities 
\begin{align}
 & \Ebig{\Big|\frac{1}{\psi(0,u)  }-\frac{1}{\tilde{\psi}(0,u)}     \Big|^{2p}  }
\leq 4^p \Big( \frac{1}{|\psi(0,u)|^{2p}  }  +   \Ebig{ \frac{1}{|\tilde{\psi}(0,u)|^{2p}  }    }   \Big)   \leq 
4^p \Big( \frac{1}{|\psi(0,u)|^{2p}  }  + n^p  \Big)   \\
\leq &   4^p \Big( \frac{2}{|\psi(0,u)|^{2p}  }   \Big).
\end{align}
This completes the proof. 
\end{proof}
The following result gives control on  $\Delta$: 
\begin{lemma} \label{Lemma_Delta} Assume that $\E[|Y_1|^{2p} ]< \infty$. Then for some positive constant $C$, 
\begin{align}
\ebig{  |\Delta(u)| 1_{\{  |\Delta(u)| >1  \}   }    }
\leq  CG(X,\eps, u,p)  \frac{1}{n^p}  \bigg(   \intu   \frac{1}{|\psi(0,u_2)|^2  }  \d u_2   \bigg) ^p
\end{align}
with
\begin{align}
&G(X,\eps, u,p) \\
 =  &   ( \| \phi''_X \phi_\eps     +\E[\eps^2 ] \phi_X \phi_\eps \|_{\lk^1}   +  \| \phi_X'\phi_\eps \|_{\lk^2} ^2  )^p
+ \Big( \intu  |\Psi'(x) |^2  \d x \Big)^p  +
\frac{u^p 1_{\{p\geq 2 \}}  \E[|Y_1|^{2p}  ]  }{n^{p-1}  }  +\E^{1/2}[|Y_1|^{2p}  ].
\end{align}
Moreover  
\begin{align}
\ebig{  |\Delta(u)|^{2p} 1_{\{  |\Delta(u)| \leq 1  \}   }    }
\leq C G(X,\eps, u,p)    \frac{1}{n^p}  \bigg( \intu \frac{1}{|\psi(0,u_2)|^2 }  \d u_2\bigg)^p  .
\end{align}
\end{lemma}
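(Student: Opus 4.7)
The plan is to establish a single clean bound
\[
\E\bigl[|\Delta(u)|^{2p}\bigr]\le C\,G(X,\eps,p,u)\,\frac{1}{n^p}\Bigl(\int_0^{|u|}\frac{1}{|\psi(0,u_2)|^2}\,du_2\Bigr)^p,
\]
from which both asserted estimates follow: the second one trivially, and the first one via the elementary inequality $|x|\mathbf{1}_{|x|>1}\le|x|^{2p}$, valid since $2p\ge 1$. The starting point is the algebraic identity
\[
\frac{\dpsiemp(0,u_2)}{\tilde\psi(0,u_2)}-\frac{\dpsi(0,u_2)}{\psi(0,u_2)} = \frac{\hat c(u_2)}{\psi(0,u_2)}-\hat c(u_2)\rem(u_2)-\Psi'(u_2)\psi(0,u_2)\rem(u_2),
\]
which writes $\Delta=A+B+C$. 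Minkowski in $L^{2p}(\PP)$ reduces the task to controlling each of the three pieces separately.

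For the linear term $A(u)=n^{-1}\sum_{j=1}^n Z_j(u)$ with centered i.i.d.\ summands $Z_j(u)=iY_{j,1}g_u(Y_{j,2})-\int_0^u\Psi'(v)\,dv$ and $g_u(y)=\int_0^u e^{iu_2 y}/\psi(0,u_2)\,du_2$, Rosenthal's inequality yields $\E|A|^{2p}\lesssim(\E|Z_1|^2/n)^p + n^{1-2p}\E|Z_1|^{2p}$. I would control the variance $\E|Z_1|^2$ by first computing $\E[Y_{1,1}^2 e^{isY_{1,2}}]=(-\phi_X''(s)+\E[\eps^2]\phi_X(s))\phi_\eps(s)$ (using $\E\eps=0$) and then invoking the Parseval-type bound $\bigl|\int F(s)(h\star\tilde h)(s)\,ds\bigr|\le\|h\|_{L^2}^2\|F\|_{L^1}$ with $h=\mathbf{1}_{[0,u]}/\psi(0,\cdot)$; this produces the first summand of $G$. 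For the Rosenthal remainder, I would use the crude Cauchy--Schwarz bounds $|g_u(y)|^{2p}\le u^p(\int 1/|\psi|^2)^p$ and $|\int\Psi'|^{2p}\le u^p(\int|\Psi'|^2)^p$, combined with the trivial inequality $u\le\int 1/|\psi|^2$ (since $|\psi|\le 1$). This yields the second and third summands of $G$, the indicator $\mathbf{1}_{p\ge 2}$ on the latter reflecting that the Rosenthal remainder only contributes for $p\ge 2$.

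For the deterministic-modulus term $C(u)=-\int_0^u \Psi'(u_2)\psi(0,u_2)\rem(u_2)\,du_2$, Minkowski's integral inequality combined with Lemma~\ref{Rest_psi} (which gives $\|\psi\rem\|_{L^{2p}}\le C n^{-1/2}/|\psi|$) and a single Cauchy--Schwarz step in $u_2$ delivers $\E|C|^{2p}\le C n^{-p}(\int|\Psi'|^2)^p(\int 1/|\psi|^2)^p$, once again fitting within the second summand of $G$.

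The cross term $B(u)=-\int_0^u\hat c(u_2)\rem(u_2)\,du_2$ is the principal technical obstacle, because $\hat c$ and $\rem$ are neither independent nor do they separate cleanly under H\"older; a naive estimate using $\|\rem\|_{L^{4p}}\le C/|\psi|$ from Lemma~\ref{Rest_psi} leaks an extra factor of $\int 1/|\psi|^2$ after integration in $u_2$. My plan is to split the $u_2$-integration according to whether $|\hat\psi(0,u_2)|\ge n^{-1/2}$: on the ``good'' event $\tilde\psi=\hat\psi$, so $\rem=(\hat\psi-\psi)/(\psi\hat\psi)$ is genuinely of order $n^{-1/2}/|\psi|^2$ in higher $L^r$-norms; on the complement the a-priori lower bound $1/|\psi|\ge n^{1/2}/2$ absorbs the loss. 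Combining this with the moment bound on $\hat c=n^{-1}\sum_j\hat c_j$ delivered by Rosenthal (and the crude estimate $\E|\hat c|^{2p}\le C\E[|Y_1|^{2p}]$ via Jensen) yields $\E|B|^{2p}\le C\E[|Y_1|^{2p}]^{1/2}\,n^{-p}(\int 1/|\psi|^2)^p$, producing the final summand of $G$. Summing the three estimates completes the proof; the hardest step is without question the cross term $B$, where the correlation between the stochastic quantities forces the thresholding split to avoid an unwanted extra power of $\int 1/|\psi|^2$.
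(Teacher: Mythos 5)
Your decomposition $\Delta=A+B+C$ agrees with the paper's $\Delta=\Delta_1+\Delta_3+\Delta_2$, and your treatment of the pieces $A$ and $C$ essentially reproduces the paper's arguments via Rosenthal, the Parseval/covariance computation (Lemma \ref{Lemma_Momente}) and Lemma \ref{Rest_psi}. The problem is the top-level plan of proving a single bound on $\E\bigl[|\Delta(u)|^{2p}\bigr]$ and then deducing both statements trivially. That plan cannot go through under the stated hypothesis $\E[|Y_1|^{2p}]<\infty$: controlling $\E\bigl[|\Delta_3(u)|^{2p}\bigr]$ (your $\E|B|^{2p}$) forces you, after any Cauchy--Schwarz/H\"older split of the joint moment of $\hat c$ and $\rem$, to estimate $\E[|\hat c(u_2)|^{4p}]$, and Rosenthal for $\hat c=n^{-1}\sum_j\hat c_j$ then requires $\E[|\hat c_j|^{4p}]$ and hence $\E[|Y_1|^{4p}]$, which is not assumed. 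Your proposed thresholding on $\{|\hat\psi|\ge n^{-1/2}\}$ sharpens the bound on $\rem$ but does not touch the $\hat c$ factor, and since $\hat c$ is unbounded and genuinely random (not a constant you can pull out pointwise), the fourth-moment obstruction persists. The asserted outcome $\E|B|^{2p}\le C\,\E[|Y_1|^{2p}]^{1/2}n^{-p}(\int 1/|\psi|^2)^p$ is the paper's bound on the \emph{$p$-th} moment of $\Delta_3$, not the $2p$-th, and no route to the $2p$-th moment under $\E[|Y_1|^{2p}]<\infty$ is indicated.

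The paper's essential idea, which your proposal misses, is precisely to avoid estimating $\E[\Delta_3^{2p}]$. It bounds only $\E[\Delta_3^{p}]$ (via the H\"older step that produces the integrand $|\psi|^{2p}\E[|\hat c\rem|^p]/|\psi|^2$, and then Cauchy--Schwarz $\E[|\hat c\rem|^p]\le\E^{1/2}[|\hat c|^{2p}]\E^{1/2}[|\rem|^{2p}]$, which stays within $2p$-th moments), and then exploits the constraints $\{|\Delta|>1\}$ resp.\ $\{|\Delta|\le 1\}$ through the event decomposition $A_j=\{|\Delta|>1\}\cap\{\argmax_k|\Delta_k|=j\}$ and its analogue $B_j$: on $A_3$ one has $|\Delta_3|>1/3$, so $|\Delta|\mathbf{1}_{A_3}\le 3|\Delta_3|\le 3^p|\Delta_3|^p$, and on $B_3$ one has $|\Delta|\le 1$, so $|\Delta|^{2p}\mathbf{1}_{B_3}\le|\Delta|^{p}\mathbf{1}_{B_3}\le 3^p|\Delta_3|^p$. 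These reductions are what allow the final bound to use only the $p$-th moment of $\Delta_3$ while using the $2p$-th moments of $\Delta_1$ and $\Delta_2$, and this is why the lemma is split into the two displayed statements rather than a single $2p$-th moment bound. Without this step your proposal would, at best, prove the lemma under the strictly stronger hypothesis $\E[|Y_1|^{4p}]<\infty$.
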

\begin{proof}
We can estimate 
\begin{align}
 \big| \Delta(u)\big|  = &   \bigg|  \int_0^u \limits  \left( \frac{\dpsiemp(0,u_2)}{\psitilde(0,u_2) }
-\frac{\dpsi(0,u_2)}{\psi(0,u_2) }   \right) \d u_2  \bigg|  
\leq   \bigg|  \int_0^u \limits  \left( \frac{\dpsiemp(0,u_2)-\dpsi(0,u_2)}{\psi(0,u_2) }  \right) \d u_2  \bigg|\\
&+  \bigg|  \int_0^u \limits  \dpsi(0,u_2)\rem(u_2)   \d u_2  \bigg|  + \bigg|  \int_0^u \limits  \left( \dpsiemp(0,u_2)-\dpsi(0,u_2) \right)  \rem(u_2)   \d u_2   \bigg|  \\
  =:&  \Delta_1(u) +\Delta_2(u)  +\Delta_3(u). 
\end{align}
Rosenthal's inequality (see, for example, \cite{Ibragimov_Ros} )  gives for some constant $C$ depending only on $p$: 
\begin{align}
\Ebig{ \Delta_1(u)^{2p}  }= & \Ebig{  \bigg|  \int_0^u \limits  \left( \frac{\dpsiemp(0,u_2)-\dpsi(0,u_2)}{\psi(0,u_2) }  \right) \d u_2  \bigg|^{2p}                     }
=  \Ebig{ \bigg|\frac{1}{n}  \sum_{j=1}^n \int_0^u\limits  \frac{  \hat{c}_j(u_2)        }{ \psi(0,u_2)    }   \d u_2  \bigg|^{2p}   }   \\
&\leq   C   ( \frac{1}{n^{p}}     \bigg(  \Ebig{  \bigg| \int_0^u \limits   
\frac{\hat{c}_j(u_2)}{\psi(0,u_2) }  \d u_2\bigg|^2  }  \bigg) ^{p}
+    \frac{1}{n^{2p-1}  }   \Ebig{  \bigg| \int_0^u \limits   
\frac{\hat{c}_j(u_2)}{\psi(0,u_2) }  \d u_2\bigg|^{2p}  }          ) .
\end{align}
Using Fubini's theorem, the Cauchy-Schwarz inequality and Lemma \ref{Lemma_Momente}, we derive that 
\begin{align}
&  \Ebig{  \bigg| \int_0^u \limits   
\frac{\hat{c}_j(u_2)}{\psi(0,u_2) }  \d u_2\bigg|^2  }  = \int_0^u \limits \int_0^u \limits \frac{\Cov( \hat{c}_j(x),\hat{c}_j(y))}{\psi(0,x) \psi(0,\m y) }  \d x \d y\\
=&    \int_0^u \limits \int_0^u \limits \frac{\E[(iY_1)^2 e^{i(x-y)Y_2} ]}{\psi(0,x) \psi(0,\m y) }  \d x \d y
-\int_0^u \limits \int_0^u \limits \frac{\E[iY_1e^{ixY_2} ]\E[iY_1e^{\m iyY_2} ]}{\psi(0,x) \psi(0,\m y) }  \d x \d y  \\
\leq & \int_0^u \limits \int_0^u \limits \frac{|\E[(iY_1)^2 e^{i(x-y)Y_2} ]|}{|\psi(0,x)|^2}  \d x \d y
\leq  \sup_{x\in [0,u] } \limits   \int_0^u \limits |\E[(iY_1)^2 e^{i ( x-y)  Y_2}| \d y   \int_0^u \limits\frac{ 1}{| \psi(0,y)|^2  } \d y \\
\label{Zerlegung_2}\leq  & \left( \|\phi_X''\phi_{\eps} +\E[\eps^2] \phi_X\phi_{\eps} \|_{\lk^1}    \right) \int_0^u\limits\frac{ 1}{| \psi(0,y)|^2  } \d y.
\end{align}
For $p\geq 2$, the Cauchy-Schwarz inequality gives 
\begin{align}
&  \frac{1}{n^{2p-1}  }   \Ebig{  \bigg| \int_0^u \limits   
\frac{\hat{c}_j(u_2)}{\psi(0,u_2) }  \d u_2\bigg|^{2p}  }    
\leq  \frac{1}{n^{2p-1}  } \bigg( \int_0^u \limits \frac{1}{|\psi(0,u_2)|^2 } \d u_2 \bigg)^{p}
 \Ebig{ \left( \int_0^u \limits  |\hat{c}_j|^2   \d u_2 \right)^{p}         }\\
\leq  &  \frac{4^p\E[|Y_j|^{2p}]}{n^{2p-1}} u^{p}
 \bigg( \int_0^u \limits \frac{1}{|\psi(0,u_2)|^2 } \d u_2 \bigg)^{p}.
\end{align}   
We have thus shown 
\begin{align}
\Ebig{ \Delta_1(u)^{2p}   }   \leq    C G(X,\eps,p,u )
\bigg(    \frac{1}{n}  \intu \frac{1}{|\psi(0,u_2) |^2  }  \d u_2 \bigg)^p . 
\end{align}
Next, thanks to Lemma  \ref{Rest_psi} and the H\"older inequality,
\begin{align}
\Ebig{\Delta_2(u)^{2p}  } =&   \Ebig{  \bigg|\int_0^u\limits  \dpsi(0,u_2)  \rem(u_2)  \d u_2      \bigg|^{2p}} = \Ebig{  \bigg|\int_0^u\limits \Psi'(u_2) \psi(0,u_2)  \rem(u_2)   \bigg|^{2p}} \\
\leq & \bigg( \intu  |\Psi'(x) |^2 \d x \bigg)^p \Ebig{ \bigg(\intu |\psi(0,u_2)|^2
|\rem(u_2) |^2\d u_2    \bigg)^p } \\
 \leq &\bigg( \intu  |\Psi'(x) |^2 \d x \bigg)^p \bigg( \intu \frac{1}{|\psi(0,u_2)|^2}  \d u_2  \bigg)^{p-1}
\intu  \frac{1}{|\psi(0,u_2)|^2} |\psi(0,u_2) |^{4p} \Ebig{|\rem(u_2) |^{2p} } \d u_2 \\
\leq &\frac{C}{n^p}   \bigg( \intu  |\Psi'(x) |^2 \d x \bigg)^p \bigg( \intu \frac{1}{|\psi(0,u_2)|^2}  \d u_2  \bigg)^{p} \leq    C G(X,\eps,p,u )
\bigg(    \frac{1}{n}  \intu \frac{1}{|\psi(0,u_2) |^2  }  \d u_2 \bigg)^p.
\end{align}
Finally,  another application of Lemma \ref{Rest_psi} and the H\"older inequality  gives 
\begin{align}
\Ebig{\Delta_3(u)^p} = &\Ebig{  \bigg|  \int_0^u \limits  \left( \dpsiemp(0,u_2)-\dpsi(0,u_2) \right)  \rem(u_2)  \d u_2   \bigg|^{p} }
= \Ebig{  \bigg|  \int_0^u \limits  
 \hat{c}(u_2)   \rem(u_2)   \d u_2   \bigg|^{p} }  \\
\leq &   \bigg(    \intu  \frac{1}{|\psi(0,u_2) |^2  }  \d u_2  \bigg)^{ p-1}  \intu  \frac{ |\psi(0,u_2)|^{2p}\Ebig{|\hat{c}(u_2) \rem(u_2) |^{p}   } }{  |\psi(0,u_2) |^2   }  \d u_2  \\
\leq & \bigg(    \intu  \frac{1}{|\psi(0,u_2) |^2  }  \d u_2  \bigg)^{ p-1} 
\intu  \frac{ |\psi(0,u_2)|^{2p}\E^{\frac{1}{2} }\Big[ |\hat{c}(u_2)|^{2p} \Big]  \E^{\frac{1}{2}  } \Big[| \rem(u_2) |^{2p}   \Big] }{  |\psi(0,u_2) |^2   }  \d u_2  \\
\leq &  C \frac{\E^{\frac{1}{2}  }[ |Y_1|^{2p}  ]  }{n^{p}  }\bigg(    \intu  \frac{1}{|\psi(0,u_2) |^2  }  \d u_2  \bigg)^{ p}
\leq C  G(X,\eps, p,u)   \bigg(  \frac{1}{n}  \intu  \frac{1}{|\psi(0,u_2) |^2  }  \d u_2  \bigg)^{ p}.
\end{align}
We set  
\[
A_j=A_j(u):= \{ |\Delta(u)|>1  \}  \cap \{  \argmax_{k=1,2,3}|\Delta_k(u)| =j  \}.
\]
We may use the fact that on $A_j$, $\Delta(u) \leq 3 \Delta_j(u)$ as well as $\Delta_j(u)>1/3$, to conclude that 
\begin{align}
 \ebig{  |\Delta(u)| 1_{\{  |\Delta(u)| >1  \}   }    }
& \leq   3\Big( \ebig{  |\Delta_1(u)| 1_{A_1} }  +\ebig{  |\Delta_2(u)| 1_{A_2} }    + \ebig{  |\Delta_3(u)| 1_{A_3} }   \Big)   \\
& \leq 3^{2p}   \Big( \ebig{  |\Delta_1(u)|^{2p}  } + 
\ebig{  |\Delta_2(u)|^{2p}  }  +  \ebig{  |\Delta_3(u)|^{p}  }   \Big) .
\end{align}
Combining this inequality with the moment  bounds on the $\Delta_j(u)$, we have shown that  for a constant $C$ depending only on $p$, 
\begin{align}
\Ebig{  |\Delta(u)|  1_{ \{|\Delta(u)| >1 \}  }   }
\leq C  G(X,\eps, p,u)    \frac{1}{n^p}  \bigg(  
\frac{1}{|\psi(0,u_2)|^2 } \d u_2   \bigg)^p. 
\end{align} 
Next, we define 
\[
B_j:= \{ |\Delta(u)| \leq 1 \}  \cap  \{   \max\{ \Delta_k(u)|k=1,2,3\}  = j \}, \   j=1,2,3.
\]
It holds that 
\begin{align}
\Ebig{ |\Delta(u)|^{2p} 1_{\{|\Delta(u)|  \leq 1 \}  }  }
& \leq 9^p \Big(  \E[\Delta_1(u)^{2p} 1_{B_1}  ]  +\E[\Delta_2(u)^{2p} 1_{B_2}  ]      +\Ebig{ |\Delta(u)|^{p}  1_{B_3}  }\Big)   \\
&  \leq 9^p \Big(  \E[\Delta_1(u)^{2p} 1_{B_1}  ]  +\E[\Delta_2(u)^{2p} 1_{B_2}  ]       +  \E[\Delta_3(u)^p 1_{B_3}  ] \Big)  .
\end{align}
This implies, using again  the moment bounds on the $\Delta_j$, 
\begin{align}
\Ebig{ |\Delta(u)|^{2p} 1_{\{|\Delta(u)|  \leq 1 \}  }  }
\leq C G(X,\eps , p,u)    \bigg( \frac{1}{n}  \intu  \frac{1}{|\psi(0,u_2)|^2 }\d u_2  \bigg)^p . 
\end{align}
\end{proof}
We can now prove the upper bounds for $\hat{f}_{X,h}$: 
\begin{proof}[\textbf{Proof of Theorem \ref{Schranke_X}}] Parseval's identity gives 
\begin{align}
\Ebig{  \big\|  f- \hat{f}_h \big\|_{\lk^2}^2 } \leq  
2 \big\|  f- \kf_h\ast f  \big\|_{\lk^2}^2
+ \frac{1}{\pi}  \int  |\Fourier \kf_h(u)|^2 \Ebig{ |\phi_X(u) - \hat{\phi}_X(u)|^2  } \d u.
\end{align}
We  use the trivial observation that $|\phi_X(u)- \hat{\phi}_X(u) |\leq 
|\phi_X(u)- \ecfmod (u)|$, as well as the fact that for $z\in \C$ with $|z|\leq 1$,  $|1- \exp(z)|\leq 2 |z|$ holds,  to derive that 
\begin{align}
& |\phi_X(u)- \hat{\phi}_X(u) |^2 1_{\{|\Delta(u)| \leq 1\}  }
\leq |\phi_X(u)- \ecfmod(u) |^2 1_{\{|\Delta(u)| \leq 1\}  }  \\
=&   |\phi_X(u)(1- \ecfmod(u)/\phi_X(u) )|^2 1_{\{|\Delta(u)| \leq 1\}  }
= |\phi_X(u)(1- \exp(\Delta(u) )   |^2 1_{\{|\Delta(u)| \leq 1\}  }\\
  \leq &   2|\phi_X(u)\Delta(u)|^2  1_{\{|\Delta(u)| \leq 1\}  }.
\end{align}
On the other hand, using the fact that $|\phi_X(u) - \hat{\phi}_X(u)|\leq 2$, as well as the Markov-inequality, we can estimate   
\begin{align}
|\phi_X(u) - \hat{\phi}_X(u)|^{2p} 1_{\{|\Delta(u)| > 1\}  }  \leq 
4^p |\Delta(u)| 1_{\{|\Delta(u)| > 1 \}  } .
\end{align}
Lemma \ref{Rest_psi},  Lemma  \ref{Lemma_Delta} and (A4)  thus give 
\begin{align}
& \Ebig{ |\phi_X(u) - \hat{\phi}_X(u) | ^2 }  \leq  2 |\phi_X(u)|^2 \ebig{|\Delta(u)|^2 1_{\{|\Delta(u)|\leq 1\} } }    +  4\ebig{  |\Delta(u)| 1_{\{|\Delta(u)|>1 \}  }   }  \\
\leq &    \frac{ C G(X,\eps,1,u) }{n}   |\phi_X(u)|^2    \intu \frac{1}{|\psi(0,u_2) |^2 }  \d u_2      + C G(X,\eps, p,u)  \bigg(  \frac{1}{n}  \intu \frac{1}{|\psi(0,u_2) |  }\d u_2  \bigg)^p  \\
  \leq &  \frac{C C_X G(X,\eps,1,u) }{n}  \intu \frac{1}{|\phi_{\eps}(u_2)|^2} \d u_2  +   C G(X,\eps,p,u) \bigg(  \frac{1}{n}  \intu \frac{1}{|\psi(0,u_2) |  }\d u_2 \bigg)^p.
\end{align}
Hence,  by assumption on the support of $\kf$, 
\begin{align}
&\inth  |\kf_h(u)|^2  \Ebig{ |\phi_X(u) - \hat{\phi}_X(u) | ^2 }  \d u \\
\leq & \frac{  CC_XG(X,\eps,1,1/h)}{n}  \inth  \intu  \frac{1}{|\phi_{\eps}(z)|^2 }  \d z  \d u  +
 \frac{ C G(X,\eps,p,1/h)}{n^p}  \inth \bigg(  \intu \frac{1}{|\psi(0,u_2) |  }\d u_2 \bigg)^p  \d u.
\end{align}
This completes the proof. 
\end{proof}
\subsection{Proof of Theorem  \ref{Schranke_eps}  }
\begin{lemma} \label{Lemma_Rest} Let $q\geq p$. Assume that $\E[|Y_1|^{2q} ]<\infty$. Then for some positive constant $C$ depending only on $p$ and $q$, 
\begin{align}
& \Ebig{    \bigg| \frac{1}{\phi_X(u)}-  \frac{1}{\tilde{\phi}_X(u) }   \bigg|^{2p}                       }\\
\leq &  C  \Big[ 
\frac{G(X,\eps,p,u)}{|\phi_X(u)|^{2p}  } \bigg(  \frac{1}{n} \int_0^u\limits 
 \frac{1}{|\psi(0,u_2) |^2}  \d u_2   \bigg)^{p} \hspace*{-0,2cm}
+ \frac{  G(X,\eps,q,u)}{|\phi_X(u)|^{4p} n^{q-p}  }   \bigg(  \int_0^u\limits 
 \frac{1}{|\psi(0,u_2) |^2}  \d u_2   \bigg)^{q} \hspace*{-0,2cm}+\frac{1}{n^p |\phi_X(u)|^{4p} }
\Big].  
\end{align}
\end{lemma}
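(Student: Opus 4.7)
The strategy parallels that of Lemma \ref{Rest_psi}, one level up: where that proof controlled $1/\tilde\psi - 1/\psi$ via Rosenthal-type moment bounds on $\hat\psi - \psi$, here the analogous role is played by Lemma \ref{Lemma_Delta}, which controls the moments of $\Delta(u)$ and hence of $\hat{\phi}_X - \phi_X$. I would first dispose of the trivial range $|\phi_X(u)| < n^{-1/2}$: the definition of $\tilde{\phi}_X$ forces $1/|\tilde{\phi}_X(u)| \le n^{1/2}$, so the triangle inequality yields $|1/\phi_X - 1/\tilde{\phi}_X|^{2p} \le 4^p/|\phi_X(u)|^{2p} \le 4^p/(n^p |\phi_X(u)|^{4p})$, which is absorbed in the third term of the claim.

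In the range $|\phi_X(u)| \ge n^{-1/2}$, set $\hat d(u) := \tilde{\phi}_X(u) - \phi_X(u)$ and split according to whether $|\tilde{\phi}_X| \ge |\phi_X|/2$ or not. In the first case the ratio $|\hat d|^{2p}/(|\phi_X|^{2p}|\tilde{\phi}_X|^{2p})$ is bounded by $4^p |\hat d|^{2p}/|\phi_X|^{4p}$; in the second, using $|\phi_X| \le 2|\hat d|$ together with $1/|\tilde{\phi}_X|^{2p} \le n^p$, it is bounded by $4^p n^p |\hat d|^{4p}/|\phi_X|^{4p}$. In either case,
\[
\Bigl|\tfrac{1}{\phi_X(u)}-\tfrac{1}{\tilde{\phi}_X(u)}\Bigr|^{2p} \le C\,\frac{|\hat d(u)|^{2p}}{|\phi_X(u)|^{4p}} + C\,\frac{n^p |\hat d(u)|^{4p}}{|\phi_X(u)|^{4p}}.
\]

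To estimate $\E[|\hat d|^{2p}]$ and $\E[|\hat d|^{4p}]$ I decompose $\hat d = (\tilde{\phi}_X - \hat{\phi}_X) + (\hat{\phi}_X - \phi_X)$. The definition of $\tilde{\phi}_X$ supplies $|\tilde{\phi}_X - \hat{\phi}_X| \le 2 n^{-1/2}$ deterministically, while the bound already used in the proof of Theorem \ref{Schranke_X} supplies $|\hat{\phi}_X - \phi_X| \le 2|\phi_X||\Delta(u)|$ on $\{|\Delta(u)| \le 1\}$ and $\le 2$ otherwise. Inserting these into $(a+b)^r \le 2^{r-1}(a^r+b^r)$ reduces the task to bounding moments of $\Delta$, which Lemma \ref{Lemma_Delta} furnishes at level $p$ by $C\, G(X,\eps,p,u)\bigl(\tfrac{1}{n}\int_0^u |\psi(0,u_2)|^{-2}\d u_2\bigr)^p$ and at level $q$ by the corresponding expression with $p$ replaced by $q$; Markov gives $\E[1_{\{|\Delta|>1\}}] \le \E[|\Delta|\,1_{\{|\Delta|>1\}}]$. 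For the $|\hat d|^{4p}$ estimate I use $|\Delta|^{4p} \le |\Delta|^{2q}$ on $\{|\Delta|\le1\}$, which is exactly where the hypothesis $q \ge p$ enters, via the higher-level bound of Lemma \ref{Lemma_Delta}. Collecting everything and dividing by $|\phi_X|^{4p}$, the prefactor $|\phi_X|^{2p}$ surviving in $\E[|\hat d|^{2p}]$ produces the first term of the claim, the contribution $n^p|\phi_X|^{4p}\E[|\Delta|^{2q}1_{\{|\Delta|\le 1\}}]/|\phi_X|^{4p}$ produces the second, and the residual $n^{-p}$ contributions (from $|\tilde{\phi}_X-\hat{\phi}_X|$ and from the trivial case) produce the third.

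The main delicacy is the splitting into the two subcases $|\tilde{\phi}_X|\ge|\phi_X|/2$ versus its complement: only by retaining the prefactor $|\phi_X|^{2p}$ coming from $|\hat{\phi}_X - \phi_X| \le 2|\phi_X||\Delta|$, rather than the cruder bound $\le 2$, can one obtain the denominator $|\phi_X|^{2p}$ in the first term of the claim, in contrast to the $|\phi_X|^{4p}$ inherent in the other two. It is this interplay between the Markov tail on $\{|\Delta|>1\}$ (which carries only $|\phi_X|^{-4p}$) and the sharp bound on $\{|\Delta|\le 1\}$ (which carries $|\phi_X|^{-2p}$) that forces the statement to simultaneously carry the two moment levels $p$ and $q$.
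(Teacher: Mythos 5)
Your overall architecture is sensible and parallels the paper's, but there is a genuine gap in the treatment of the $\E[|\hat d|^{4p}]$ term. You write ``$|\Delta|^{4p}\le|\Delta|^{2q}$ on $\{|\Delta|\le1\}$, which is exactly where the hypothesis $q\ge p$ enters.'' On $\{|\Delta|\le1\}$ this inequality requires $4p\ge 2q$, i.e.\ $q\le 2p$ --- the \emph{opposite} of a lower bound on $q$. The lemma permits arbitrary $q\ge p$, and the paper in fact invokes it (in the proof of Theorem \ref{Schranke_eps}) with $p=1$ and $q$ potentially large, so $q>2p$ is a live case. For such $q$ your middle contribution is $n^p\E[|\Delta|^{4p}1_{\{|\Delta|\le1\}}]$; applying Lemma \ref{Lemma_Delta} at the only admissible level $2p$ gives $n^p\,G(X,\eps,2p,u)\bigl(\tfrac{1}{n}\int_0^u|\psi(0,u_2)|^{-2}\d u_2\bigr)^{2p}$, and since $\bigl(\tfrac{1}{n}\int\bigr)^{2p}\ge\bigl(\tfrac{1}{n}\int\bigr)^{q}$ when the integral ratio is below one, this cannot be absorbed into the claimed middle term $\tfrac{G(X,\eps,q,u)}{|\phi_X(u)|^{4p}n^{q-p}}\bigl(\int\bigr)^q$. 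So the bound you derive is, for $q>2p$, strictly weaker than the statement.

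The root of the problem is the split you chose. You separate on $\{|\tilde\phi_X|\ge|\phi_X|/2\}$ versus its complement and then estimate $\E[|\hat d|^{4p}]$ over the whole space, which inevitably forces you through $\E[|\Delta|^{4p}1_{\{|\Delta|\le1\}}]$. The paper instead splits directly on $\{|\Delta(u)|\le1\}$ versus $\{|\Delta(u)|>1\}$. On the first event $|\hat\phi_X^{mod}(u)|=|\phi_X(u)|\,|\exp(\Delta(u))|\ge|\phi_X(u)|/e$, hence $|\tilde\phi_X(u)|\ge|\phi_X(u)|/e$; the crude bound $1/|\tilde\phi_X|\le\sqrt n$, and with it the factor $n^p$, is never invoked there. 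On the second event the paper only ever needs $|\phi_X-\hat\phi_X|\le2$ together with $\E[|\Delta|\,1_{\{|\Delta|>1\}}]$ at level $q$, so no $1_{\{|\Delta|\le1\}}$ moments appear at all. Your argument can be repaired by (i) replacing the threshold $1/2$ by any constant $\le1/e$ so that the bad event $\{|\tilde\phi_X|<c|\phi_X|\}$ is contained in $\{|\Delta|>1\}$, and (ii) restricting the $|\hat d|^{4p}$ estimate to that bad event, using there $|\hat\phi_X-\phi_X|\le2$ rather than $\le2|\phi_X||\Delta|$. Then the fourth-power term contributes only $n^p\bigl(n^{-2p}+\E[|\Delta|1_{\{|\Delta|>1\}}]\bigr)/|\phi_X|^{4p}$, which is exactly (iii) plus (ii), and the false inequality $|\Delta|^{4p}\le|\Delta|^{2q}$ is no longer needed.
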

\begin{proof}
We have 
\begin{align}
 \Ebig{    \bigg| \frac{1}{\phi_X(u)}-  \frac{1}{\tilde{\phi}_X(u) }   \bigg|^{2p}                       }  = \Ebig{    \frac{|\phi_X(u) - \tilde{\phi}_X(u)|^{2p}}{|\phi_X(u)  \tilde{\phi}_X(u)|^{2p}}                   } . 
\end{align}
Using the definition  of $\ecfmod$, as well as the fact that $|\exp(z)|\geq 1/e$ holds for $z\in \C$,  $|z|\leq 1$,  we derive that 
\begin{align}
|\ecfmod (u)|  1_{\{|\Delta(u)| \leq 1 \}  }
= |{\phi}_X(u)| |\exp( \Delta(u) )|1_{\{|\Delta(u)| \leq 1 \}  }  \geq 1/e
|\phi_X(u)| 1_{\{|\Delta(u)| \leq 1\}  }.
\end{align}
Consequently,  by definition of $\tilde{\phi}_X$ and $\hat{\phi}_X$, 
\begin{align}
&|\tilde{\phi}_X(u) |1_{\{|\Delta(u)| \leq 1\}  } 
\geq      |\hat{\phi}_X(u)| 1_{\{|\Delta(u)| \leq 1\}  }  = \Big(   |\ecfmod(u)| 1_{\{|\ecfmod(u)| \leq 1 \}  } 
+1_{\{|\ecfmod(u)| \geq 1\} } \Big)  1_{\{|\Delta(u)| \leq 1 \}  } \\
\geq & \frac{1}{e} |\phi_X(u)|1_{\{|\Delta(u)|\leq 1\}  }.
\end{align}
Next, 
\begin{align}
    |\phi_X(u) - \tilde{\phi}_X(u)|^{2p}           \leq 4^{p}
 \Big( |\phi_X(u) - \hat{\phi}_X(u) |^{2p} +       |\hat{\phi}_X(u) -\tilde{\phi}_X(u)|^{2p} \Big) 
\end{align}
and it holds that 
\begin{align}
\Ebig{     |\hat{\phi}_X(u) -\tilde{\phi}_X(u)|^{2p}                 } &= \Ebig{     |\hat{\phi}_X(u) -\tilde{\phi}_X(u)|^{2p}   1_{\{|\hat{\phi}_X(u)|\leq n^{-1/2}  \} }              }   \\
&\leq  \Ebig{    ( |\hat{\phi}_X(u)| + n^{-1/2} )^{2p}   1_{\{|\hat{\phi}_X(u)|\leq
n^{-1/2}  \}  }              }  \leq 4^p n^{-p}. 
\end{align}
We use Lemma \ref{Lemma_Delta}  
 to conclude that 
\begin{align}
& \Ebig{    \frac{|\phi_X(u) - \tilde{\phi}_X(u)|^{2p}}{|\phi_X(u)  \tilde{\phi}_X(u)|^{2p}}    1_{  \{  |\Delta(u)|\leq 1\}  }               } 
\leq  \frac{\Ebig{|\phi_X(u) - \tilde{\phi}_X(u)|^{2p}  1_{  \{  |\Delta(u)|\leq 1\}  }      }    }{1/e^{2p} |\phi_X(u)  |^{4p}}  \\
 \leq & \frac{2|\phi_X(u)|^{2p} \E[|\Delta(u)|^{2p}1_{\{|\Delta(u)| \leq 1 \}  } ] +n^{-p} }{1/e^{2p}|\phi_X(u)|^{4p} }   \\
\leq & C\Big(  \frac{ G(X,\eps, p,u)  }{  |\phi_X(u)|^{2p} }   \bigg( \frac{1}{n} \int_0^u\limits \frac{1}{|\psi(0,u_2) |^2 } \d u_2    \bigg)^p  +\frac{1}{
n^{p}|\phi_X(u)|^{4p}   }  \Big). 
\end{align}
Next, using  the fact that by definition of $\tilde{\phi}_X$,  $|1/\tilde{\phi}_X(u)|\leq \sqrt{n}$ holds, as well as the fact that 
\begin{align}
\left|  \frac{1}{\tilde{\phi}_X(u)}  \right| \leq 2 \left|  \frac{1}{\phi_X(u) }  \right| +2  \left|\frac{1}{\tilde{\phi}_X(u)  } -  \frac{1}{\phi_X(u) }  \right| =2 \left|  \frac{1}{\phi_X(u) }  \right| +2  
\frac{  |\phi_X(u) - \tilde{\phi}_X(u)  | }{   |\phi_X(u) \tilde{\phi}_X(u) |  }, 
\end{align}
 we conclude that 
\begin{align}
& \Ebig{    \frac{|\phi_X(u) - \tilde{\phi}_X(u)|^{2p}}{|\phi_X(u)  \tilde{\phi}_X(u)|^{2p}}    1_{  \{  |\Delta(u)|> 1\}  }        }  \\
\leq & 4^{p}  \Big(      \frac{ \Ebig{  |\phi_X(u) - \tilde{\phi}_X(u)|^{2p}1_{  \{  |\Delta(u)|> 1\}  }     }    }{|\phi_X(u)  |^{4p}}    +n^p 
      \frac{ \Ebig{  |\phi_X(u) - \tilde{\phi}_X(u)|^{4p}1_{  \{  |\Delta(u)|> 1\}  }     }    }{|\phi_X(u)  |^{4p}}   \Big) 
\\
\leq  &8^p \Big(      \frac{ \Ebig{  |\Delta(u)|  1_{  \{  |\Delta(u)|> 1\}  }  }  +n^{-p}      }{|\phi_X(u)  |^{4p}}    +n^p 
      \frac{ \Ebig{ |\Delta(u)|  1_{  \{  |\Delta(u)|> 1\}  }  } +n^{-2p}   }{|\phi_X(u)  |^{4p}}   \Big)  \\
\leq  &   \frac{C G(X,\eps, q,u)}{|\phi_X(u)|^{4p} }\Big[n^{p}
 \bigg(\frac{1}{n} \int_0^{u}\limits \frac{1}{|\psi(0,u_2)|^2} \d u_2    \bigg)^q 
+\frac{1}{n^p}  \Big]. 
\end{align}
This completes the proof of the lemma. 
\end{proof} 
\begin{proof}[\textbf{Proof of Theorem \ref{Schranke_eps} }] By Parseval's inequality and by assumption on the support of $\kf$, 
\begin{align}
\Ebig{  \|  f_{\eps} - \hat{f}_{\eps,h }   \|_{\lk^2}^2   }
\leq 2 \| f_{\eps} - \kf_h \ast f_{\eps}  \|_{\lk^2}^2 +  \frac{1}{\pi}
\inth \Ebig{  | \phi_{\eps} (u) -\hat{\phi}_{\eps} (u) |^2 }  \d u.
\end{align}

It holds that 
\begin{align}
& |\phi_{\eps}(u) - \hat{\phi}_{\eps}(u)  |^2
= \bigg|\frac{\psi(0,u)}{\phi_X(u) } - \frac{\hat{\psi}(0,u)}{\tilde{\phi}_X(u) } \bigg|^2\\
\leq & 3 \Big( \frac{  |\psi(0,u) - \psiemp(0,u)|^2 }{ |\phi_X(u)|^2 }  
+ |\psi(0,u) - \psiemp(0,u)|^2  \bigg|  \frac{1}{\phi_X(u) }- \frac{1}{\tilde{\phi}_X(u) }  \bigg|^2  +|\psi(0,u)|^2   \bigg|  \frac{1}{\phi_X(u) }- \frac{1}{\tilde{\phi}_X(u) }  \bigg|^2 \Big).
\end{align}  
Since $\psi(0,u)$ is a characteristic function and $\hat{\psi}(0,u)$ its empirical counterpart,  
\begin{align}
\frac{  \E[|\psi(0,u) - \psiemp(0,u)|^2 ]}{ |\phi_X(u)|^2 } \leq n^{-1}
\frac{ 1}{ |\phi_X(u)|^2 }.
\end{align}
 Lemma \ref{Lemma_Rest} and assumption (A5) yield  
\begin{align}
& |\psi(0,u)| ^2\Ebigg{  \bigg|  \frac{1}{\phi_X(u) }- \frac{1}{\tilde{\phi}_X(u) }  \bigg|^2                 } \\
 \leq&C  |\psi(0,u)| ^2 \Big[   \frac{G(X,\eps,1,u) }{|\phi_X(u)|^2 }
\frac{1}{n}  \int_0^u \limits \frac{1}{|\psi(0,z) |^2 }  \d z 
+ \frac{   G(X,\eps, q, u)    }{  |\phi_X(u)|^{4} n^{q-1}  } 
\bigg(  \int_0^u \limits \frac{1}{|\psi(0,z)|^2 }  \d z     \bigg)^{q} 
+\frac{n^{-1}}{|\phi_X(u)|^4 }  \Big]  \\
\leq & C |\phi_{\eps}(u)|^2 \Big[ \frac{G(X,\eps,1,u) }{n}  \int_0^u \limits \frac{1}{|\psi(0,z) |^2 }  \d z +   \frac{G(X,\eps, q, u) }{|\phi_X(u) |^2 n^{q-1} }\bigg(  \int_0^u \limits \frac{1}{|\psi(0,z)|^2 }  \d z \bigg)^{q}   +\frac{ 1 }  {n|\phi_X(u)|^2 } \Big] \\
\leq & C C_{\eps} \Big[ \frac{G(X,\eps,1,u)}{n}   \int_0^u\limits \frac{1}{|\phi_X(z)|^2 }  \d z    +  \frac{ G(X,\eps,1,u)}{ |\phi_X(u)|^2 n^{q-1} }  \bigg(  \int_0^u\limits \frac{1}{|\phi_X(z)|^2 }  \d z \bigg) 
\bigg(\int_0^u\limits \frac{1}{|\psi(0,z)|^2}  \d z \bigg)^{q-1} \\
& +\frac{ 1 }  {n|\phi_X(u)|^2 }   \Big] .
\end{align}
Finally, using the Cauchy-Schwarz inequality and again Lemma  \ref{Lemma_Rest}, we derive that 
\begin{align}
& \ebigg{ |\psi(0,u) - \hat{\psi}(0,u) |^{2}\bigg|  \frac{1}{\phi_X(u) }- \frac{1}{\tilde{\phi}_X(u) }  \bigg|^2                 }
\leq  \E^{\frac{1}{2}  } \Big[ |\psi(0,u) - \hat{\psi}(0,u) |^{4}  \Big]
 \E^{\frac{1}{2} } \Big[ \bigg|  \frac{1}{\phi_X(u) }- \frac{1}{\tilde{\phi}_X(u) }  \bigg|^4   \Big]    \\
\leq & \frac{C}{n}  
\Big[  \frac{G(X,\eps,2,u)^{\frac{1}{2} } }{|\phi_X(u)|^2 n }  \int_0^u  \limits
\frac{1}{|\psi(0,u_2) |^2 } \d u_2
+\frac{
G(X,\eps,2q,(u) )^{\frac{1}{2}} }{|\phi_X(u)|^4n^{q-1}} \bigg(
 \int_0^u\limits \frac{1}{|\psi(0,u_2)|^2}  \d u_2   \bigg)^q  +\frac{1}{n|\phi_X(u)|^4 }  \Big].
\end{align}
Putting the above together, we have shown that for some positive constant $C$,
\begin{align}
&\inth  \Ebig{   \Big|  \phi_{\eps}(u) - \hat{\phi}_{\eps} (u) \Big|^2 }  \ d u
\leq CC_{\eps} \Big[    \frac{ G(X,\eps,1,1/h)}{n }  \inth \intu \frac{1}{|\phi_X(z)|^2 }  \d z \d u  \\
&+  \frac{G(X,\eps,q,1/h)}{n^{q-1}  } \inth \frac{1}{|\phi_X(u)|^2 }  \bigg( \intu \frac{1}{|\phi_X(z )|^2 }  \d z    \bigg) \bigg(  \intu  \frac{1}{|\psi(0,z)|^2 }   \d z  \bigg)^{q-1}  \d u     \\
&  +  \frac{G(X,\eps,2,1/h)^{1/2}}{n^2} \inth  \frac{1}{|\phi_X(u)|^2 }  \ 
\bigg(\intu \limits \frac{1}{|\psi(0,z)|^2}  \d z   \bigg) \d u   \\
 & + \frac{G(X,\eps,2q,1/h) ^{1/2} }{n^q} \inth   \frac{1}{|\phi_X(u)|^4  }  
\bigg(\intu   \frac{1}{|\psi(0,z)|^2}  \d z   \bigg)^q +  \frac{1}{n^2} \inth \frac{1}{|\phi_X(u)|^4}  \d u \Big],
\end{align}
which gives the statement of the theorem. 
\end{proof}
\section{Appendix}
\begin{lemma} \label{Lemma_Momente}
The following holds for the partial derivatives of $\psi$:
\begin{equation}
\frac{\partial^k}{\partial u_1^k}  \psi(0,u_2)= \Ebig{(iY_1)^k e^{iu_2 Y_2}        }=\sum_{m=0}^k {k \choose m}  \E[ (i\eps)^{k-m} ] \phi_{\eps}(u_2)
\phi_X^{(k)}(u_2).
\end{equation}
\end{lemma}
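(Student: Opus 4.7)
The first equality is obtained by differentiating $\psi(u_1,u_2) = \E[e^{i(u_1 Y_1 + u_2 Y_2)}]$ under the expectation sign $k$ times with respect to $u_1$ and evaluating at $u_1=0$; differentiation under the integral is justified by the finiteness of $\E[|Y_1|^k]$, which is present as a moment hypothesis whenever this lemma is invoked in the proofs above.

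For the second equality, the plan is to substitute $Y_1 = X + \eps_1$ and $Y_2 = X + \eps_2$, use the binomial theorem on $(X+\eps_1)^k$, and then exploit the joint independence of $X$, $\eps_1$ and $\eps_2$ to factor the resulting expectation. Concretely, writing
\begin{align}
\E[(iY_1)^k e^{iu_2 Y_2}] = i^k \sum_{m=0}^k \binom{k}{m} \E\bigl[X^m \eps_1^{k-m} e^{iu_2 X} e^{iu_2 \eps_2}\bigr],
\end{align}
independence lets me split this into $\E[X^m e^{iu_2 X}]\,\E[\eps_1^{k-m}]\,\phi_\eps(u_2)$. The final step is to identify $\E[X^m e^{iu_2 X}] = i^{-m}\phi_X^{(m)}(u_2)$, which is again a routine differentiation-under-the-integral argument applied to $\phi_X$ and relies on $\E[|X|^m] < \infty$ (implicit in $\E[|Y_1|^k]<\infty$ together with $\E[|\eps|^k]<\infty$). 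Collecting the powers of $i$ through $i^k \cdot i^{-m} = i^{k-m}$ and absorbing them into $(i\eps)^{k-m}$ gives the displayed sum.

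There is essentially no conceptual obstacle; the entire argument is a bookkeeping exercise combining the binomial theorem, independence of $(X,\eps_1,\eps_2)$, and the standard moment--derivative correspondence for characteristic functions. The only mild subtlety is to verify that the relevant moments exist so that differentiation commutes with expectation, but the integrability assumptions under which the lemma is called make this automatic.
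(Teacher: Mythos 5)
Your proof matches the paper's own argument essentially line for line: differentiate under the expectation, substitute $Y_1=X+\eps_1$, $Y_2=X+\eps_2$, expand $(X+\eps_1)^k$ by the binomial theorem, factor by independence, and identify $\E[(iX)^m e^{iu_2 X}]=\phi_X^{(m)}(u_2)$. One small point worth flagging: both your derivation and the paper's proof produce $\phi_X^{(m)}(u_2)$ inside the sum (correctly), so the $\phi_X^{(k)}(u_2)$ in the lemma's display is a typo in the statement, not something your argument should reproduce.
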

\begin{proof} By definition of $\psi$ and by independence of $X, \eps_1$ and $\eps_2$, 
\begin{align}
\phantom{=}& \frac{\partial^k}{\partial u_1^k}  \psi(0,u_2)=\Ebig{\frac{\partial^k}{\partial u_1^k}  e^{iu_1 Y_1+ iu_2 Y_2} \big|_{u_1=0}          }
= \Ebig{ (i Y_1)^k e^{iu_2 Y_2}  }  \\
=& \sum_{m=0}^k {k \choose m}  \Ebig{ (i X)^m (i\eps_1)^{k-m}  e^{iu_2 X}
e^{iu_2 \eps_2} }  
=   \sum_{m=0}^k {k \choose m}  \E[ (i \eps)^{k-m} ] \E[(i X)^m e^{iu_2 X} ] \E[e^{iu_2 \eps} ]  \\ =& \sum_{m=0}^k {k \choose m}  \E[(i\eps)^{k-m} ] \phi_{\eps}(u_2)
\phi_X^{(m)}(u_2).
\end{align}
\end{proof}
\bibliographystyle{plainnat}
\bibliography{literatur}
\end{document}